\newtheorem{theorem}{Theorem}[section]
\newtheorem{prop}[theorem]{Proposition}
\newtheorem{claim}[theorem]{Claim}
\newtheorem{lem}[theorem]{Lemma}
\newtheorem{question}[theorem]{Question}
\newtheorem*{cor*}{Corollary}
\newtheorem*{thm*}{Theorem}
\newtheorem*{lem*}{Lemma}
\newtheorem*{prop*}{Proposition}
\newtheorem{claim*}{Claim}
\theoremstyle{definition}
\newtheorem{definition}[theorem]{Definition}
\newtheorem{example}[theorem]{Example}
\newtheorem*{defn*}{Definition}
\theoremstyle{remark}
\newtheorem{remark}[theorem]{Remark}
\newcommand{\SL}{\operatorname{SL}}
\newcommand{\PSL}{\operatorname{PSL}}
\newcommand{\GL}{\operatorname{GL}}
\newcommand{\Id}{\operatorname{Id}}
\newcommand{\Aut}{\operatorname{Aut}}
\newcommand{\Out}{\operatorname{Out}}
\newcommand{\Prob}{\operatorname{Prob}}
\newcommand{\conv}{\operatorname{conv}}
\newcommand{\act}{\curvearrowright}
\newcommand{\Gr}{\operatorname{Gr}}
\newcommand{\cA}{\mathcal{A}}
\newcommand{\cB}{\mathcal{B}}
\newcommand{\bP}{{\mathbb{P}}}
\newcommand{\bR}{{\mathbb{R}}}
\newcommand{\N}{{\mathbb{N}}}
\newcommand{\Z}{{\mathbb{Z}}}
\newcommand{\R}{{\mathbb{R}}}
\newcommand{\C}{{\mathbb{C}}}
\newcommand{\Homeo}{\operatorname{Homeo}}
\newcommand{\id}{\operatorname{id}}
\newcommand{\E}{\mathbb{E}}
\newcommand{\norm}[1]{\left \lVert #1 \right \rVert}
\newcommand{\abs}[1]{\left \lvert #1 \right \rvert}
\renewcommand{\phi}{\varphi}
\newcommand{\nor}{\vartriangleleft}
\def\l@subsection{\@tocline{2}{0pt}{1pc}{5pc}{}} \def\l@subsection{\@tocline{2}{0pt}{2pc}{6pc}{}} \makeatother
\author{Tattwamasi Amrutam}
\address{Ben Gurion University of the Negev.
	Department of Mathematics.
	Be'er Sheva, 8410501, Israel.
}
\email{tattwamasiamrutam@gmail.com}
\author{Eli Glasner}
\address{Tel-Aviv University.
	Department of Mathematics.
	Tel-Aviv, Israel.
}
\email{glasner@math.tau.ac.il}
\author{Yair Glasner} 
\address{Ben Gurion University of the Negev.
	Department of Mathematics.
	Be'er Sheva, 8410501, Israel.
}
\email{yairgl@math.bgu.ac.il}
\subjclass[2010]{Primary 37A55, 37B05; Secondary 46L55}
\keywords{$C^*$-crossed products, intermediate subalgebras, rigid,
strong proximality, plumps sets, $C^*$-simple groups}
\thanks{This research was supported by grants from the Israel Science Foundation:
ISF 1175/18 for the first author,
ISF 1194/19 for the second, and
ISF 2919/19 for the third. The first named author is also supported by research funding from the European Research Council
(ERC) under the European Union's Seventh Framework Program
(FP7-2007-2013) (Grant agreement No. 101078193).}
\date{\today}
\begin{document}
\title[Rigidity Vs. Strong Proximality]{Crossed products of dynamical systems; rigidity Vs. strong proximality}
\begin{abstract}
Given a dynamical system $(X, \Gamma)$, call
the corresponding crossed product
$C^*$-algebra $C(X) \rtimes_{r} \Gamma$ {\it{reflecting}} if every intermediate
$C^*$-algebra
$C^*_r(\Gamma)  < \cA < C(X) \rtimes_{r} \Gamma$ is of the form
$\mathcal{A} =  C(Y) \rtimes_{r} \Gamma$, corresponding to a dynamical factor $X \rightarrow Y$. It is called {\it{almost reflecting}} if $\E(\cA) \subset \cA$ for every such $\cA$. These two notions coincide for groups admitting the approximation property (AP). Let $\Gamma$ be a non-elementary convergence group or a lattice in $\SL_d(\R)$ for some $d \ge 2$. We show that any uniformly rigid system $(X,\Gamma)$ is almost reflecting. In particular, this holds for any equicontinuous action. In the von Neumann setting, for the same groups $\Gamma$ and any uniformly rigid system $(X,\cB,\mu, \Gamma)$ the crossed product algebra $L^{\infty}(X,\mu) \rtimes \Gamma$ is reflecting. An inclusion of algebras $\cA \subset \cB$ is called {\it{minimal ambient}} if there are no intermediate algebras. As a demonstration of our methods, we construct examples of minimal ambient inclusions with various interesting properties in the $C^*$ and the von Neumann settings. 
\end{abstract}
\maketitle
\tableofcontents
\newpage
\section{Introduction}
Let $\Gamma$ be a discrete group acting by homeomorphisms on a compact space $X$. The pair $(X, \Gamma)$ is called a topological dynamical system or a $\Gamma$-flow. Such a flow is called 
{\it{uniformly rigid}} if there exists a sequence (or a net in the non-metrizable case) $\{\gamma_i\}$ of elements that are nontrivial but converge uniformly to the identity homeomorphism $\Id_X$. Such a sequence is referred to as {\it{a rigid sequence on $X$}}. Every equicontinuous flow (of an infinite group) is rigid, but 
the class of uniformly rigid flows is much wider.
E.g., it was recently shown in \cite{glasner2022rigid} that 
every residually finite group admits many such flows that are not equicontinuous. With any $\Gamma$-flow, one associates in a standard way a crossed product $C^*$-algebra $C(X) \rtimes_r \Gamma$. 

Similarly, when $\Gamma$ acts by measure preserving transformations on a standard probability space, we obtain a measurable dynamical system $(X,\cB,\mu,\Gamma)$. Such a system is called {\it{rigid}} if there is a net $\{\gamma_i\}$ of elements that act non-trivially but converge to the identity in the sense that $\lim_{n \rightarrow \infty}\mu(\gamma_n A \triangle A) = 0, \ \forall A \in \cB$. Namely $\gamma_n \rightarrow \Id$ in the Polish group $\Aut(X,\cB,\mu)$, endowed with the weak topology (See \cite{Kechris:GA} for details on the Polish topology on this group). With a measurable dynamical system, one associates a crossed product von Neumann algebra $L^{\infty}(X,\mu) \rtimes \Gamma$. Such crossed product algebras combine into a single package the topological or measurable properties of the space together with the action of the group. Our results pertain to both topological and measurable settings. In order to improve the readability, we proceed, throughout the introduction, to discuss the topological setting. 

Adopting the perspective undertaken in \cite{A19,amrutam2023intermediate}, let us consider the intermediate subalgebras 
$$
\{\cA \ | \ C^{*}_{r} (\Gamma) < \cA < C(X) \rtimes_r \Gamma\},
$$
 as non-commutative factors of the dynamical system $(X,\Gamma)$. We are aware of two sources for such intermediate algebras. Firstly, any dynamical $\Gamma$-factor $X \rightarrow Y$ gives rise to dynamical algebras of the form $C(Y) \rtimes_r \Gamma$. Secondly, as in \cite{amrutam2023intermediate} ideals $I \lhd C^*_r(\Gamma)$ often give rise to intermediate algebras. 
\begin{question} \label{q:two_sources}
Is it true that these are essentially the only sources for intermediate algebras?
\end{question}
It is clear from that classification of intermediate algebras for an irrational rotation of the circle $(S^1, T_{\alpha})$ given in the aforementioned paper that ideals in $C^*_r (\Gamma)$ give rise to intermediate algebras in complicated ways, which we are far from understanding entirely. Thus, it makes sense first to address Question \ref{q:two_sources} in the $C^*$-simple case. For this, the following notation plays an important part:
\begin{definition} \label{def:reflecting}
A crossed product $C(X) \rtimes_r \Gamma$ (or in the measurable setting $L^{\infty}(X,\mu) \rtimes \Gamma$) is called {\it{reflecting}} if every intermediate algebra of the form $C^*_r (\Gamma) < \cA < C(X) \rtimes_r \Gamma$ is of the form $\cA = C(Y) \rtimes_r \Gamma$ for some $\Gamma$-factor $X \rightarrow Y$. The crossed product is called {\it{almost reflecting}} if $\E(\cA) \subset \cA$ for any such intermediate algebra. 
\end{definition}
In the above definition, $\E: C(X) \rtimes_r \Gamma \rightarrow C(X)$ is the canonical conditional expectation. 
\begin{remark}
It is clear that every reflecting crossed product is almost reflecting, but the connection between the two notions goes deeper. In the measurable setting, these two notions are always equivalent. In the topological setting, Suzuki \cite[Proposition~3.4]{Suz17} proves that when $\Gamma$ admits the approximation property (AP), then the two properties are the same. For further details on the connection between these two properties, see Subsection~\ref{subsec:reflecting}. 
\end{remark}

In \cite{A19}, the first named author establishes the reflecting property for non-faithful actions of a large class of $C^*$-simple groups. Suzuki \cite{Suz18} (also see \cite{ryo2021remark}), proves a relative reflecting result to the effect that if $(X,\Gamma) \rightarrow (Y,\Gamma)$ is a factor such that $\Gamma$ acts freely on $Y$; then all the intermediate algebras $C(Y) \rtimes_r \Gamma < \cA < C(X) \rtimes_{r} \Gamma$ satisfy the condition $\E(\cA) \subset \cA$ and hence come from intermediate dynamical factors $X \rightarrow Z \rightarrow Y$ in the case that $\Gamma$ has the property (AP). 

We treat two kinds of $C^*$-simple groups.  $\Gamma$ is called {\it{a convergence group}} if there is a flow $(Z,\Gamma)$ with the property that $\Gamma$ acts properly on $Z^3\setminus \Delta$ where $\Delta$ here is the generalized diagonal. Examples of convergence groups include all Gromov hyperbolic and relatively hyperbolic groups as well as mapping class groups of closed surfaces of negative Euler characteristic and the groups $\Out(F_n)$ and $\Aut(F_n)$. A subgroup $\Gamma < G$ in a locally compact group $G$ is called {\it{a lattice}} if it is both discrete and cofinite in the sense that there is a $G$-invariant probability measure on $G/\Gamma$. With this terminology, our main result is the following: 
\begin{theorem} \label{thm:main}
Assume that $\Gamma$ is either a non-elementary convergence group or a lattice in $G = \SL_d(\R)$ for some $d \ge 2$. Then
\begin{itemize}
\item Every rigid flow $(X,\Gamma)$ is almost reflecting. 
\item Every rigid measurable dynamical system $(X,\cB,\mu,\Gamma)$ is reflecting. 
\end{itemize}
\end{theorem}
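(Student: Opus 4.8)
The plan is to reduce the theorem to a single structural mechanism: produce, for each group $\Gamma$ in the statement, a boundary-type flow $(Z,\Gamma)$ carrying a rich enough family of "contracting" elements, and then play the rigidity of $(X,\Gamma)$ against the strong proximality coming from $Z$. Concretely, given an intermediate algebra $C^*_r(\Gamma) < \cA < C(X)\rtimes_r\Gamma$ and an element $a = \sum_{\gamma} a_\gamma \gamma \in \cA$ (finitely supported for density), I want to show each Fourier coefficient $a_\gamma \in C(X)$, viewed as sitting in $C(X)\rtimes_r\Gamma$ via $a_\gamma \gamma$, already lies in $\cA$; this is exactly the statement $\E(\cA)\subset\cA$ once one also extracts $a_e = \E(a)$. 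The standard route (following the $A19$ / $\mathrm{Suz}$ / $\mathrm{amrutam2023intermediate}$ circle of ideas) is: average $a$ over conjugation by a suitable net $g_i\in\Gamma$ chosen so that (i) $g_i$ converges to a point in $Z$ forcing $g_i b g_i^{-1}$ to converge, for $b\in C(X)$, to a scalar-like (point-evaluation) operator by strong proximality of the $\Gamma$-action on the relevant boundary, while (ii) simultaneously exploiting rigidity. The averaging is done by a state or a Haar-type average on a compact subgroup / by a convergence argument, producing in the limit an element of $\cA$ supported on the fixed-set of the limit dynamics.

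The key steps, in order, are as follows. First, recall the boundary structure: for $\Gamma$ a non-elementary convergence group, take $Z$ to be the limit set with its convergence action (the action on $Z^3\setminus\Delta$ is proper, and $\Gamma\curvearrowright Z$ is a minimal strongly proximal boundary); for $\Gamma$ a lattice in $\SL_d(\R)$, take $Z = G/P$ a full flag variety, on which $\Gamma$ acts minimally and strongly proximally, and use that a lattice "sees" the contracting $\R$-diagonalizable elements of $G$ in an approximate sense (this is where cocompactness vs. non-uniform lattices requires care, handled via the Howe--Moore / mixing input or via Poisson boundary theory). Second, for $x\in X$ and a contracting sequence $g_i$ in $\Gamma$ with attracting point $\xi\in Z$ and repelling point $\eta$, analyze $\mathrm{Ad}(g_i)$ on $C(X)\rtimes_r\Gamma$: the compactly supported part of $a$ gets transported, and I extract a limit using weak-$*$ compactness of the unit ball in the von Neumann setting, or a careful norm/ultralimit argument in the $C^*$ setting, so that the limit lies in the intermediate algebra $\cA$ (which is closed). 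Third — and this is the crux — combine this with the rigid net $\{\gamma_n\}$ on $X$: because $\gamma_n\to\Id_X$ uniformly (resp. weakly in $\Aut(X,\cB,\mu)$), conjugating $a$ by $\gamma_n$ moves the coefficients $a_\gamma$ negligibly while permuting the supporting group elements by the (eventually trivial) conjugation $\gamma\mapsto\gamma_n\gamma\gamma_n^{-1}$; playing this against the boundary averaging lets one peel off individual Fourier coefficients $a_\gamma\gamma$ and conclude each lies in $\cA$. In the measurable case one upgrades "$\E(\cA)\subset\cA$" to full reflectivity using that $\E(\cA)$ is then a $\Gamma$-invariant von Neumann subalgebra of $L^\infty(X,\mu)$, hence of the form $L^\infty(Y,\nu)$ for a measurable factor $X\to Y$, and $\cA = L^\infty(Y)\rtimes\Gamma$; the analogous topological step is exactly what the "(AP) $\Rightarrow$ reflecting $=$ almost reflecting" remark supplies when applicable.

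The main obstacle I expect is reconciling the two competing limiting procedures: the boundary (strong proximality) limit wants to \emph{collapse} the $C(X)$-part to a point evaluation, while rigidity wants to \emph{preserve} the $C(X)$-part almost exactly — and making these cohere requires choosing the nets $g_i$ and $\gamma_n$ with compatible asymptotics and controlling cross terms $g_i\gamma_n g_i^{-1}$ in $\Gamma$. A secondary difficulty, specific to the $\SL_d(\R)$-lattice case, is that a non-uniform lattice need not literally contain the one-parameter contracting subgroups of $G$, so the contraction on $G/P$ must be obtained approximately — via equidistribution of $\Gamma$-orbits or via the amenability of the $P$-action and the identification of $G/P$ as the topological boundary $\partial_F\Gamma$ — and one must verify the averaging limits still land inside $\cA$. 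Finally, I expect a technical but not conceptual hurdle in the purely $C^*$ (topological) setting to get $\E(\cA)\subset\cA$ without assuming (AP): this likely requires replacing weak-$*$ limits by an explicit approximation of $\E$ by averages $b\mapsto \int \mathrm{Ad}(g)(b)\,$ that converge in norm on $\cA$, using the precise proper/convergence dynamics on $Z^3\setminus\Delta$ rather than mere strong proximality.
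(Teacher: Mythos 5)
Your high-level picture --- playing rigidity on $X$ against strong proximality on an auxiliary boundary $Z$ and averaging over conjugation to recover $\E(a)$, with the measurable/(AP) upgrades at the end --- does match the paper's starting point, but the two issues you flag as ``obstacles'' are exactly where the proof lives, and your sketch does not supply them. First, the collapsing mechanism is misplaced: strong proximality of $(Z,\Gamma)$ says nothing about $g_i b g_i^{-1}$ for $b\in C(X)$ (on the rigid flow $X$ nothing collapses --- it carries invariant measures). What the paper actually does is extend a state $\varphi$ on $C^*_r(\Gamma)$ to $C(Z)\rtimes_r\Gamma$, use a sequence that is \emph{simultaneously} rigid on $X$ and strongly proximal on $Z$ (an RSP sequence) to produce limit states $\psi_g$ that are point evaluations on $C(Z)$, invoke topological freeness of $Z$ to get $\psi_{g_n}(\lambda(\gamma_k))=0$ for all but finitely many $k$, and Ces\`aro-average to land on the canonical trace $\tau_0$ inside $\overline{\conv}^{w^*}\{\Gamma_\epsilon\cdot\varphi\}$, where $\Gamma_\epsilon=\Gamma\cap B_{\Homeo(X)}(\epsilon)$. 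By Haagerup's characterization this shows the sets $\Lambda^f_\epsilon$ are \emph{plump}, and plumpness is what converts this weak-$*$ information into finite norm averages $\frac1m\sum_j\lambda_{s_j}(\cdot)\lambda_{s_j}^{-1}$ with $s_j$ nearly fixing $\E(a)$, killing the off-diagonal terms in operator norm and proving $\E(a)\in\cA$ with no (AP) assumption. Your proposed weak-$*$/ultralimit conjugation averages have no reason to stay in the norm-closed $\cA$; resolving that is the central idea (plump sets \`a la Haagerup), not a technicality to be deferred.

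Second, you never construct the required sequence that is simultaneously rigid on $X$ and contracting on $Z$, which you yourself identify as ``the main obstacle.'' For convergence groups the paper takes any rigid sequence, extracts a convergence subsequence with attracting and repelling points $a,r$, and replaces it by $s_i=\gamma_{n_i}\,g\,\gamma_{n_i}\,g^{-1}$ for a suitable $g$ (chosen so $g^{-1}r$ avoids the limit data) to defeat the repelling point: the new sequence is still rigid and is now strongly proximal on the limit set. For a lattice in $\SL_d(\R)$, uniform or not, the paper applies Furstenberg's lemma (tameness of the projective action) directly to a rigid sequence, minimizes the dimensions of the exceptional set $W$ and the limit subspace $L$, and uses Borel density to pass to $s\gamma_n s^{-1}\gamma_n$ until $W=\emptyset$, obtaining strong proximality on a Grassmannian $\Gr(l,\R^d)$ rather than on $G/P$; no Howe--Moore or equidistribution input is needed, and your suggestion along those lines does not address the crucial constraint that the contracting elements themselves form a rigid sequence on $X$. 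Without these two ingredients the argument does not close.
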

\begin{remark}
It is well known that if $\Gamma$ is a Gromov hyperbolic group or a lattice in $\SL_2(\R)$, it admits the approximation property (AP). In these cases, it follows that every rigid $\Gamma$-flow is reflecting.  
\end{remark}

The proofs rely on a tension between the given rigid flow $(X,\Gamma)$ and an auxiliary strongly proximal flow $(Z,\Gamma)$. In the convergence group case, take $(Z,\Gamma)$ to be the minimal non-elementary convergence action. When $\Gamma < \SL_d(\R)$ is a lattice, the role of $Z$ is played by the natural action on the projective space $\bP^{d-1} = \bP^{d-1}(\R)$ or more generally on some Grassman variety $\Gr(l,\R^d)$. 
In this setup, the following definition will play an essential part:
\begin{definition} \label{def:rsp}
Let $(X,\Gamma)$ be a uniformly rigid $\Gamma$-flow and $(Z,\Gamma)$ a minimal strongly proximal $\Gamma$-flow. A sequence $\{s_i\} \subset \Gamma$ will be called a {\it{rigid strongly proximal sequence}}, or an {\it{RSP sequence}} for short, if it is a rigid sequence on $X$ and at the same time a strongly proximal sequence on $Z$ in the sense that there exists some point $z_0 \in Z$ such that $(s_i)_* \nu \stackrel{w^*}{\longrightarrow} \delta_{z_0}$ for every probability measure $\mu \in \Prob(Z)$.
Similarly, a sequence $\{s_i\}$ is called RSP with respect to a measurable dynamical system $(X,\cB,\mu,\Gamma)$ and $(Z,\Gamma)$ if it is measure rigid on $X$ and strongly proximal on $Z$.
\end{definition}
\begin{theorem} \thlabel{thm:RSP}
Let $\Gamma$ be a countable group and $(Z,\Gamma)$ a $\Gamma$-boundary, i.e. a strongly proximal minimal $\Gamma$-flow.
\begin{itemize}
\item Let $(X,\Gamma)$ be a uniformly rigid flow. Assume that there exists an RSP sequence $\{s_i\} \subset \Gamma$ with respect to the two flows $(X,\Gamma)$ and $(Z,\Gamma)$. Then  $(X,\Gamma)$ is almost reflecting. 
\item Let $(X,\cB,\mu,\Gamma)$ be a rigid measurable dynamical system. Assume that there exists an RSP sequence $\{s_i\}$  with respect to $(X,\cB,\mu,\Gamma)$ and $(Z,\Gamma)$ then $(X,\cB,\mu,\Gamma)$ is reflecting. 
\end{itemize}
\end{theorem}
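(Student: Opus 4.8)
The plan is to prove both items of \thref{thm:RSP} simultaneously, exploiting the fact that an RSP sequence $\{s_i\}$ acts in two incompatible ways: it is essentially the identity on $X$ but collapses $\Prob(Z)$ onto a point. The starting point is a given intermediate algebra $C^*_r(\Gamma) < \cA < C(X)\rtimes_r \Gamma$ (resp.\ $L^\infty(\Gamma) < \cA < L^\infty(X,\mu)\rtimes\Gamma$); I want to show $\E(a) \in \cA$ for every $a \in \cA$, where $\E$ is the canonical conditional expectation onto $C(X)$ (resp.\ $L^\infty(X,\mu)$). Write a typical element $a \in \cA$ as a (formal) sum $a = \sum_{\gamma} f_\gamma \lambda_\gamma$ with $f_\gamma \in C(X)$; the goal is to produce $f_e = \E(a)$ inside $\cA$ by averaging out the off-diagonal terms.

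First I would use the strong proximality on $Z$ to build the averaging device. Since $(Z,\Gamma)$ is a $\Gamma$-boundary and $\{s_i\}$ is strongly proximal on $Z$, there is a point $z_0 \in Z$ with $(s_i)_*\mu \to \delta_{z_0}$ for all $\mu \in \Prob(Z)$. The idea is to embed $C(Z)$ (or use a $\Gamma$-equivariant u.c.p.\ map / the Furstenberg boundary machinery, cf.\ the injectivity of $C(Z)$-valued expectations) to obtain a $\Gamma$-equivariant conditional expectation-type map, and then consider the net of maps $\Phi_i(a) = s_i a s_i^{-1}$ or a suitable state $\psi_i = \psi(s_i \cdot s_i^{-1})$ built from evaluation at $z_0$. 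The second step is the key rigidity input: because $\{s_i\}$ is a rigid sequence on $X$, for every $f \in C(X)$ we have $\|s_i \cdot f - f\|_\infty \to 0$ (resp.\ in the measurable case $\|s_i f - f\|_2 \to 0$ in $L^2(X,\mu)$), so conjugation by $s_i$ fixes the "diagonal part" $C(X)$ asymptotically. Combining: for $a = \sum_\gamma f_\gamma \lambda_\gamma \in \cA$, the conjugate $s_i a s_i^{-1} = \sum_\gamma (s_i\cdot f_\gamma) \lambda_{s_i \gamma s_i^{-1}}$ stays in $\cA$ (as $\cA$ is closed under conjugation by group elements, since $C^*_r(\Gamma) \subset \cA$); applying the $Z$-boundary state/expectation and using that $s_i$ pushes measures on $Z$ to $\delta_{z_0}$ kills all terms $\gamma \neq e$ (because a nontrivial $\gamma$ moves $z_0$ off itself, or because the associated $C(Z)$-coefficient vanishes at $z_0$ in the limit), while the $\gamma = e$ term converges to $f_e = \E(a)$ by rigidity. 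Taking a weak-$*$ (resp.\ ultraweak) limit and using that $\cA$ is closed in the appropriate topology — here one passes to a subnet, or in the $C^*$ case one argues the limit lands in $\cA^{**}$ and then cuts back down, which is exactly where the distinction between reflecting and almost reflecting surfaces — yields $\E(a) \in \cA$. In the von Neumann setting the algebra is weak-$*$ closed and $\Gamma$ acts freely-enough (or one invokes that $L^\infty(X,\mu)\rtimes\Gamma$ with $\E(\cA)\subset\cA$ forces $\cA = L^\infty(Y,\nu)\rtimes\Gamma$), so one gets the full reflecting conclusion.

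The third step is the standard "bootstrap" from $\E(\cA)\subset\cA$ to the structural description: once $\E(\cA) \subset \cA$, the set $B = \E(\cA) = \cA \cap C(X)$ is a $\Gamma$-invariant C$^*$-subalgebra of $C(X)$, hence equals $C(Y)$ for a $\Gamma$-factor $X \to Y$; and one checks $\cA \supseteq C(Y)\rtimes_r\Gamma$ from $\cA \supseteq B$ and $\cA \supseteq C^*_r(\Gamma)$, with the reverse inclusion following by a Fourier-coefficient argument (each $f_\gamma$ of an element of $\cA$ lies in $\E(\lambda_\gamma^* \cA) \subseteq B$). In the topological case, getting the reverse inclusion is exactly the subtlety that distinguishes "almost reflecting" from "reflecting", which is why only the former is claimed there; under (AP) Suzuki's result closes the gap.

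I expect the main obstacle to be \emph{making the averaging limit land back inside $\cA$ rather than in a larger completion}. Conjugation by $s_i$ keeps things in $\cA$, and the boundary/evaluation map is $\Gamma$-equivariant, but the composite "take state at $z_0$ then weak-$*$ limit over $i$" is a priori only a u.c.p.\ map into $\cA''$ or into the enveloping algebra; one must argue — using that the limit map is actually a conditional expectation onto $C(X)$ composed with the inclusion, plus some form of the Hahn–Banach/convexity argument in $\cA$ itself, or the uniqueness of the conditional expectation $\E$ on $C(X)\rtimes_r\Gamma$ for $C^*$-simple $\Gamma$ — that the output of $\E$ on $a$ genuinely belongs to the norm-closed algebra $\cA$. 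A secondary technical point is handling elements $a$ that are not finitely supported: one approximates in norm by finitely supported elements (using the Fejér-type summation available because $C^*_r(\Gamma)\subset\cA$ provides the needed multipliers when $\Gamma$ has, or the relevant flows force, enough regularity), or one works directly with the continuity of all the maps involved. The measurable case is cleaner on both counts because $L^2$-machinery and weak-$*$ closedness of von Neumann algebras make the limiting step automatic.
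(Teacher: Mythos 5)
Your high-level strategy (conjugate by the RSP sequence, use rigidity to preserve the diagonal and strong proximality on $Z$ to kill the off-diagonal part) is the right intuition, but as written it has two genuine gaps, and they sit exactly where the paper does its real work. First, the step you yourself flag --- getting the limit to land in $\cA$ --- is not resolved by anything you propose: a weak-$*$ limit of boundary-evaluated states $\psi(s_i\cdot s_i^{-1})$ is a state (or at best a u.c.p.\ map into a bidual), and neither uniqueness of $\E$ nor an unspecified Hahn--Banach argument converts that into the membership $\E(a)\in\cA$. The paper's resolution is to never take such a limit on elements of $\cA$ at all. Instead, the RSP sequence is used only to verify that the sets $\Gamma_\epsilon=\Gamma\cap B_{\Homeo(X)}(\epsilon)\subset\Lambda^{f}_{\epsilon_1}$ are \emph{plump}: one extends an arbitrary state $\varphi$ on $C^*_r(\Gamma)$ to $C(Z)\rtimes_r\Gamma$, shows via the multiplicative-domain trick that the limit states $\psi_g=\lim_n gs_ng^{-1}\varphi$ vanish on $\lambda(t)$ for $t$ outside the stabilizer of $ga$, moves the point by suitable $g_n$ and Ces\`aro-averages to reach $\tau_0$ in $\overline{\conv}^{w^*}\{\Gamma_\epsilon\varphi\}$, and then invokes the Haagerup equivalence (\thref{thm:Haagerup}) to turn this weak-$*$ statement into a \emph{norm} estimate: finitely many $s_1,\dots,s_m\in\Lambda^{\E(a)}_\epsilon$ with $\bigl\|\frac1m\sum_j\lambda(s_jt_is_j^{-1})\bigr\|$ small. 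Only then does one average inside the crossed product (\thref{mainfaithful}): the finite averages $\frac1m\sum_j\lambda_{s_j}a\lambda_{s_j^{-1}}$ lie in $\cA$ because $\lambda_{s_j}\in C^*_r(\Gamma)\subset\cA$, and they approximate $\E(a)$ in norm, so norm-closedness of $\cA$ finishes the job. Without this conversion from weak-$*$ data about states to finite norm-averages by elements of $\Lambda^{f}_\epsilon$, your argument does not close.

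Second, your reason for the off-diagonal terms dying --- ``a nontrivial $\gamma$ moves $z_0$ off itself'' --- is false in general: $(Z,\Gamma)$ is only a boundary, stabilizers of boundary points are typically nontrivial, and the limit point $z_0$ (the paper's $a$) need not have trivial stabilizer even when the action is topologically free. What one actually gets is $\psi_g(\lambda(t))=0$ only for $t\notin\Gamma_{ga}$; to kill \emph{all} nontrivial Fourier modes the paper enumerates $\Gamma\setminus\{e\}$, uses minimality and topological freeness to choose $g_n$ with $g_na\notin\bigcup_{i\le n}\Fix(\gamma_i)$, and Ces\`aro-averages the $\psi_{g_n}$ so that the limit functional is $\tau_0$. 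Your single-sequence scheme has no mechanism for handling elements fixing $z_0$, and it also mixes $C(X)$-coefficients with a $C(Z)$-evaluation without specifying a map that makes this coherent; the paper avoids this by keeping the two flows in separate roles ($Z$ only enters through states on $C^*_r(\Gamma)$ extended to $C(Z)\rtimes_r\Gamma$, while the averaging with $C(X)$-coefficients happens later, purely in norm, inside $C(X)\rtimes_r\Gamma$). Your final ``bootstrap'' paragraph is fine but tangential, since the topological statement only claims almost reflecting and the measurable case is closed by the cited result of Suzuki.
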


\subsubsection*{Relation to other papers} As might be clear from the outset, our methods and proofs are strongly influenced by the techniques introduced in Haagerup's beautiful paper \cite{Haagerup}. Theorem \ref{thm:main} generalizes \cite[Corollary~A.2]{A19}, where the first named author, along with Yongle Jiang, showed that $(X,\Gamma)$ is reflecting whenever $\Gamma$ is a residually finite
$C^*$-simple group, and $\Gamma \curvearrowright X$, a free 
exact odometer (profinite) action. The action is called exact if the decreasing sequence of finite index subgroups are normal.
A similar result has been obtained in \cite{chifan2020rigidity} for compact 
probability measure preserving
ergodic extensions $(Y,\eta, \Gamma)$ of 
$(X,\nu, \Gamma)$, for i.c.c groups $\Gamma$. 
In particular, the authors have shown that the compact ergodic extension $\rho: (Y,\eta, \Gamma)\to (X,\nu, \Gamma)$ is reflecting for an i.c.c. group $\Gamma$ 
(see \cite[Theorem~1.3]{chifan2020rigidity}). Moreover, Jiang and Skalski have also independently shown that the profinite action $\Gamma\curvearrowright X$ is reflecting for a residually finite i.c.c. group (see \cite[Corollary~3.11]{jiang2021maximal}).

\subsubsection*{Organization of the paper}
Section \ref{sec:Prelim} is dedicated to terminology and preliminaries. Section \ref{sec:plump} introduces the notion of plump sets and establishes a central technical \thref{mainfaithful}, relating plump sets and the reflecting property. Section \ref{sec:RSP} is dedicated to RSP sequences and the proof of \thref{thm:RSP}. Sections \ref{sec:conv} and \ref{sec:lat} are dedicated, respectively, to the convergence group case and the lattice case of the main Theorem \ref{thm:main}. Finally, in Section \ref{sec:minimalambient}, all these tools are used to construct minimal ambient inclusions with various nice properties. 

\section{Preliminaries} \label{sec:Prelim}

\subsection{\texorpdfstring{$\Gamma$}{Gamma}-flows}
Let $\Gamma$ be a discrete group. 
A {\em $\Gamma$-dynamical system}
(or a $\Gamma$-flow)
$(X, \Gamma)$ (also denoted as $\Gamma \act X$) on a compact (usually metrizable) space $X$, is given by a group homomorphism $\alpha : \Gamma \to \Homeo(X)$, where the latter,
namely, the topological group of homeomorphisms of $X$ is equipped with the topology of uniform convergence.
When $X$ is metrizable, $\Homeo(X)$ is a Polish topological group. Thus the flow is {\it{uniformly rigid}} as defined in the introduction if $\alpha(\Gamma)$ is a non-discrete subgroup of $\Homeo(X)$. We often suppress $\alpha$ from our notation and 
for $x \in X$ and $\gamma \in \Gamma$, write 
$\gamma x$ instead of $\alpha(\gamma) x$.
A dynamical system is {\it minimal} when every orbit $\Gamma x$
is dense in $X$.

An action is called {\em effective} when $\alpha$
is injective. It is {\em free}
when $\gamma x \not =x$ for all $x \in X$ and every 
$e \not =\gamma \in \Gamma$. Finally, it is 
{\em topologically free} when for every
$\gamma \in \Gamma \setminus \{e\}$ the set
$\{x \in X : \gamma x = x\}$ has empty interior.
As $\Gamma$ is always assumed to be countable, it follows that 
$(X, \Gamma)$ is topologically free iff
the set of points $X_0 := \{x \in X : \gamma x \not = x,
\ \forall \gamma \in \Gamma \setminus \{e\}\}$
is a residual subset of $X$. 
In fact, it is easy to see that the subset $X_0$
is a $G_\delta$ subset of $X$; hence, when the action is minimal, topological freeness is equivalent to the above set being nonempty.

\begin{definition} 
The {\em enveloping semigroup} $E(X, \Gamma)$ of a $\Gamma$-flow is defined to be the closure of the image of $\Gamma$ in the product space $X^X$. 
\end{definition}
\begin{definition} 
A dynamical system $(X,\Gamma)$ is called {\em tame} if $E(X, \Gamma)$ is a 
Fr\'{e}chet-Uryshon space, equivalently, if every sequence of elements 
of $E(X,\Gamma)$
admits a convergent subsequence.
\end{definition}

A dynamical system satisfying the equivalent conditions in the following lemma is called {\it{strongly proximal}} 
\begin{lem}\label{lem-SP}
The following properties are equivalent:
\begin{enumerate}
\item 
For every Borel probability measure $\mu$ on $X$, there is a net $\Lambda =\{\gamma_i\} \subset \Gamma$
and a point $x \in X$
such that 
$$
weak^*{\text{-}}\lim (\gamma_i)_*(\mu) \to \delta_x,
$$
\item 
The induced dynamical system $(M(X),\Gamma)$
on the compact space of Borel probability measures
on $X$, endowed with the weak$^*$-topology, is proximal.
\item 
There is a net $\Lambda =\{\gamma_i\} \subset \Gamma$
and a point $x \in X$
such that 
$$
weak^*{\text{-}}\lim (\gamma_i)_*(\mu) \to \delta_x,
$$
for every Borel probability measure $\mu$ on $X$.
\end{enumerate}
If the system $(X,\Gamma)$ is in addition tame,
then $\Lambda$ can be taken to be a {\it sequence}.
\end{lem}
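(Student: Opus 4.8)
The plan is to run the cycle $(3)\Rightarrow(1)\Rightarrow(2)\Rightarrow(3)$ and then to handle the tame addendum. The implication $(3)\Rightarrow(1)$ is immediate. For $(1)\Rightarrow(2)$ I would use a convexity trick: given $\mu_1,\mu_2\in M(X)$, apply $(1)$ to $\nu=\tfrac12(\mu_1+\mu_2)$ to obtain a net $\{\gamma_i\}$ and a point $x$ with $(\gamma_i)_*\nu\to\delta_x$; passing to a subnet along which $(\gamma_i)_*\mu_1\to\lambda_1$ and $(\gamma_i)_*\mu_2\to\lambda_2$ in the compact space $M(X)$, affinity of the pushforward gives $\tfrac12(\lambda_1+\lambda_2)=\delta_x$, and since $\delta_x$ is an extreme point of $M(X)$ this forces $\lambda_1=\lambda_2=\delta_x$. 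Hence $\mu_1,\mu_2$ form a proximal pair in $(M(X),\Gamma)$, so $(2)$ holds.

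The substantial step is $(2)\Rightarrow(3)$. First I would recall the standard fact that a proximal flow has a \emph{unique} minimal subflow: if $M_1,M_2$ are minimal subflows, pick $y_k\in M_k$ and, by proximality, an element $p$ of the enveloping semigroup with $py_1=py_2=:y$; minimal subflows are invariant under the enveloping semigroup (they are closed and $\Gamma$-invariant), so $y\in M_1\cap M_2$, whence $M_1=\overline{\Gamma y}=M_2$. Applying this to $(M(X),\Gamma)$: the point masses $\widetilde X=\{\delta_x:x\in X\}$ form a closed $\Gamma$-invariant subset of $M(X)$, hence contain a minimal subflow, so by uniqueness \emph{the} minimal subflow $M_0$ of $(M(X),\Gamma)$ consists entirely of point masses. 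Now fix a minimal left ideal $L\subseteq E(M(X),\Gamma)$ and an idempotent $u=u^2\in L$. For every $\mu\in M(X)$ the orbit $L\mu$ is a minimal subflow of $(M(X),\Gamma)$, so $L\mu=M_0$; in particular $u\mu\in M_0$ is a point mass, \emph{for every} $\mu$. On the other hand $u$ is a pointwise limit of pushforward maps, hence affine, so $u\bigl(\tfrac12\mu_1+\tfrac12\mu_2\bigr)=\tfrac12 u\mu_1+\tfrac12 u\mu_2$; since the left side is a point mass, i.e.\ an extreme point of $M(X)$, we conclude $u\mu_1=u\mu_2$. Therefore $u$ is the constant map $\mu\mapsto\delta_{x_0}$ for a single $x_0\in X$, and unwinding the definition of $E(M(X),\Gamma)$ as the closure of $\{(\gamma)_*:\gamma\in\Gamma\}$ inside $M(X)^{M(X)}$ produces a net $\{\gamma_i\}\subset\Gamma$ with $(\gamma_i)_*\mu\to\delta_{x_0}$ for every $\mu$, which is exactly $(3)$.

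For the tame addendum I would exploit that when $(X,\Gamma)$ is tame every element of $E(X,\Gamma)$ is a Borel map $X\to X$, so that $p\mapsto p_*$ is a well-defined continuous $\Gamma$-map of $E(X,\Gamma)$ onto $E(M(X),\Gamma)$ (the convergence $(\gamma_i)_*\mu\to p_*\mu$ follows from bounded convergence applied to $\int f\circ\gamma_i\,d\mu$ for $f\in C(X)$). The constant map $u$ produced above is then $p_*$ for the constant map $p\colon X\to\{x_0\}$, which lies in $E(X,\Gamma)$; since $(X,\Gamma)$ is tame, $E(X,\Gamma)$ is Fr\'echet--Urysohn, so $p$ is the limit of a \emph{sequence} $\gamma_n\to p$ in $X^X$, and then $(\gamma_n)_*\mu\to\delta_{x_0}$ for every $\mu$. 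Thus $\Lambda$ may be taken to be a sequence.

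The step I expect to be the main obstacle is the middle one: identifying the unique minimal subflow of $(M(X),\Gamma)$ inside the point masses and combining this with affinity of enveloping‑semigroup elements to force $u$ to be constant — in particular one must check carefully that $L\mu$ is a minimal subflow for \emph{every} $\mu$ (not just for $\mu$ in the minimal subflow), and that affinity is inherited by pointwise limits of maps that need not be continuous. The tame refinement additionally rests on the standard measurability of enveloping‑semigroup elements of tame systems.
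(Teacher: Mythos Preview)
Your proof is correct and follows essentially the same approach as the paper: the convexity/extreme-point trick for $(1)\Rightarrow(2)$, a minimal idempotent $u$ in $E(M(X),\Gamma)$ shown to be the constant map $\mu\mapsto\delta_{x_0}$ for $(2)\Rightarrow(3)$, and the identification of $E(M(X),\Gamma)$ with $E(X,\Gamma)$ via dominated convergence for the tame addendum. Your write-up is in fact more detailed than the paper's, which asserts $u[M(X)]=\{\delta_x\}$ and $E(M(X),\Gamma)=E(X,\Gamma)$ without spelling out the affinity and surjectivity arguments you supply.
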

\begin{proof}
(1) $\implies$ (2):
Given $\mu_1, \mu_2 \in M(X)$, form the measure
$\mu = \frac12 (\mu_1 + \mu_2)$. By assumption
there is a net $\{\gamma_i\}_{i \in I} \subset \Gamma$
and a point $x \in X$ 
with $\lim (\gamma_i)_*(\mu) = \delta_x$,
and it follows that $\mu_1$ and $\mu_2$ are proximal.
(2) $\implies$ (3):
The enveloping semigroup of the proximal system $(M(X), \Gamma)$ has a unique minimal left ideal and if
if $u$ is a minimal idempotent in this ideal, then
$u[M(X)] = \{\delta_x\}$, for some point $x \in X$.
Taking $\Lambda =\{\gamma_i\}_{i \in I} \subset \Gamma$
to be a net with $u = \lim \gamma_i$ in $E(M(X), \Gamma)$,
renders the required net.
(3) $\implies$ (1): Clear.
Finally, in a tame system, the Lebesgue-dominated convergence theorem implies that
$E(M(X), \Gamma) = E(X, \Gamma)$, so by definition, $\Lambda$ can be taken to be a sequence.
\end{proof}
A minimal strongly proximal system is often called a $\Gamma$-{\em boundary}.

\subsection{Measurable dynamical systems}
All measurable spaces $(X,\mathcal{B},\mu)$ are considered standard Borel. All actions $\Gamma\curvearrowright (X,\mathcal{B},\mu)$ are probability measure preserving. We always assume that factor maps $\pi: (X,\mu)\to (Y, \eta)$ preserve the measure in the sense that $\pi_*\mu = \eta$. Associated with such a factor map $\pi: (X,\mu)\to (Y, \eta)$, there is the inclusion of the crossed products $L^{\infty}(Y,\eta)\rtimes\Gamma\subset L^{\infty}(X,\mu)\rtimes\Gamma$.  Given a factor $\pi: (X,\mu)\to (Y, \eta)$, there is a bijection between the set of intermediate $\Gamma$-von Neumann subalgebras $L^{\infty}(Y,\eta)\subset\mathcal{N}\subset L^{\infty}(X,\mu)$ and intermediate factors $(X,\mu) \rightarrow (Z,\theta) \rightarrow (Y,\eta)$. 
\subsection{Crossed products}
\label{subsec:crossprod}
Let us briefly recall the construction of the reduced crossed product and refer the readers to \cite{BO:book} for more details.
Let $\mathcal{A}$ be a unital {\em $\Gamma$-$C^*$-algebra}, i.e., the $C^*$-algebra $\mathcal{A}$ is equipped with a group homomorphism $\alpha$ from $\Gamma$ into the group of $*$-automorphisms on $\mathcal{A}$. 
Given a unital $\Gamma$-$C^*$-algebra, 
 let $\pi:\mathcal{A} \to \mathbb{B}(\mathcal{H})$ be a faithful $*$-representation. Denote by $\ell^2(\Gamma,\mathcal{H})$, the space of square summable $\mathcal{H}$-valued functions on $\Gamma$, i.e.,
\[\ell^2(\Gamma,\mathcal{H})=\left\{\xi:\Gamma\to \mathcal{H}\text{ such that }\sum_{t\in\Gamma}\|\xi(t)\|_{\mathcal{H}}^2<\infty.\right\}\]
The action $\Gamma\curvearrowright \ell^2(\Gamma,H)$ is by left translation:
\[\lambda_s\xi(t):=\xi(s^{-1}t), \xi \in \ell^2(\Gamma,\mathcal{H}), s,t \in \Gamma.\]
(Sometimes we write $\lambda(s)$ instead of $\lambda_s$.)
Let $\sigma$ be the $*$-representation \[\sigma:\mathcal{A} \to B(\ell^2(\Gamma,\mathcal{H}))\] defined by \[\sigma(a)(\xi)(t):=\pi(t^{-1}a)\xi(t), a \in \mathcal{A},\]
where $\xi \in \ell^2(\Gamma,\mathcal{H})$ and $t \in \Gamma$.
The {\em reduced crossed product $C^*$-algebra} $\mathcal{A}\rtimes_{\alpha,r}\Gamma$ is the closure in $B(\ell^2(\Gamma,\mathcal{H}))$ of the subalgebra spanned by the operators $\sigma(a)$ and $\lambda_s$, i.e.,
\[\mathcal{A}\rtimes_{\alpha,r}\Gamma=\overline{\text{Span}\left\{\sigma(a)\lambda(s):a\in\mathcal{A},s\in\Gamma\right\}}\]
Note that 
$$\lambda_s\sigma(a)\lambda_{s^{-1}}=\sigma(s.a)$$
for all $s \in \Gamma$ and $a \in \mathcal{A}$. 
The {\em reduced $C^*$-algebra} of $\Gamma$, denoted by $C_r^*(\Gamma)$, is the $C^*$-algebra spanned by $\{\lambda(s): s\in\Gamma\}$.
It follows from the construction that $\mathcal{A}\rtimes_r\Gamma$ contains $C_{\lambda}^*(\Gamma)$ as a $C^*$-sub-algebra. 
The reduced crossed product $\mathcal{A}\rtimes_r\Gamma$ comes equipped with a $\Gamma$-equivariant canonical conditional expectation $\mathbb{E}:\mathcal{A}\rtimes_r\Gamma\to\mathcal{A}$ defined by 
\[
\mathbb{E}\left(\sigma(a_s)\lambda_s\right)=
\begin{cases}
0 & \text{if $s\ne e$}\\
\sigma(a_e) & \text{otherwise}.
\end{cases}
\]
Denote by $\tau_0$ the canonical trace on $C_r^*(\Gamma)$. It is defined by the property that it sends every non-identity element $\lambda_s$ to $0$.

The construction of the von Neumann crossed product is similar to that of the reduced crossed product $C^*$-algebra. Given a faithful $*$-representation $\pi : \mathcal{M}\to \mathbb{B}(\mathcal{H})$ of a $\Gamma$-von Neumann algebra
$\mathcal{M}$ into the space
of bounded operators on the Hilbert space $\mathcal{H}$, the von Neumann crossed product
$\mathcal{M}\rtimes\Gamma$ is generated (as a von Neumann algebra inside $\mathbb{B}(\ell^2
(\Gamma, \mathcal{H})$) by the left regular representation $\lambda$ of $\Gamma$ and a faithful $*$-representation of $\mathcal{M}$ in $\mathbb{B}(\ell^2
(\Gamma, \mathcal{H}))$.
Moreover, this representation translates the action $\Gamma \curvearrowright\mathcal{M}$ into an inner action
by the unitaries $\{\lambda(s), s \in \Gamma\}$. 

When the $\Gamma$-$C^*$-algebra $\mathcal{A}$ is unital and commutative, $\mathcal{A} = C(X)$, where $(X,\Gamma)$ is a $\Gamma$-dynamical system. Thus, with every compact topological dynamical system $(X, \Gamma)$, we associate the reduced  $C^*$-crossed product $C(X) \rtimes_{r} \Gamma$. Similarly with a commutative von Neumann algebras $\mathcal{M} = L^{\infty}(X,\mu)$ one associates the von Neumann algebraic crossed product $L^{\infty}(X,\mu)\rtimes\Gamma$.

\subsection{On reflecting and almost reflecting actions} 
\label{subsec:reflecting}
Let $(X,\Gamma)$ be a compact dynamical system and  
$C^*(\Gamma) \subset \mathcal{A} \subset C(X) \rtimes_r \Gamma$.
Clearly the condition $\mathbb{E}(\mathcal{A}) \subset \mathcal{A}$
is a necessary condition for $\mathcal{A}$ to be of the 
form $C(Y) \rtimes_r \Gamma$, for a factor $X \to Y$.

Suppose this condition is satisfied and
let $\mathcal{K}$ be the algebra generated by 
$\mathbb{E}(\mathcal{A})$.
Then $\mathcal{K} = C(Y)$ for a factor $X \to Y$, and 
$C(Y) \rtimes_r \Gamma \subset \mathcal{A}$.
Given $a \in \mathcal{A}$, by assumption,
the ``Fourier coefficients'' $\mathbb{E}(a \lambda(\gamma))$ are in $C(Y)$
for all $\gamma \in \Gamma$. Now, assuming that $\Gamma$ 
has property-(AP), $a$ can be written as a convergent infinite 
sum $\sum_{\gamma \in \Gamma} c_\gamma E(a\lambda(\gamma))\lambda(\gamma)$, where $c_\gamma
\in \C$ are appropriate weights (see \cite[Theorem~5.4]{crann2022non}), and it follows that
actually $\mathcal{A} = C(Y) \rtimes_r \Gamma$. This establishes the following:

\begin{prop}\cite[Proposition 2.7]{Suz18}\label{remark-AP}
For a group $\Gamma$ with (AP), the condition
$\mathbb{E}(\mathcal{A}) \subset \mathcal{A}$
is necessary and sufficient for an intermediate
algebra $\mathcal{A}$ to be a crossed product of the form
$ C(Y) \rtimes_r \Gamma$ corresponding to a dynamical factor 
$X \to Y$.
Thus, for $\Gamma$ with (AP) being almost reflecting, it is the same as being reflecting.
\end{prop}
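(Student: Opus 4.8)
\emph{Proof plan.} The plan is to prove sufficiency; necessity is immediate, since if $\cA = C(Y)\rtimes_r\Gamma$ for a $\Gamma$-factor $X \to Y$ then $\E(\cA) = C(Y) \subset \cA$. So I would start from an intermediate $C^*$-algebra $C^*_r(\Gamma) \subset \cA \subset C(X)\rtimes_r\Gamma$ satisfying $\E(\cA) \subset \cA$ and first extract the candidate factor: let $\cK$ be the closed unital $*$-subalgebra of $C(X)$ generated by $\E(\cA)$. Since every $\lambda(\gamma) \in C^*_r(\Gamma) \subset \cA$, the algebra $\cA$ is invariant under conjugation by the $\lambda(\gamma)$, and because $\E$ is $\Gamma$-equivariant this makes $\E(\cA)$, hence $\cK$, a $\Gamma$-invariant commutative unital $C^*$-subalgebra of $C(X)$. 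By Gelfand duality it is of the form $\cK = C(Y)$ for a compact $\Gamma$-space $Y$, and the inclusion $C(Y) \hookrightarrow C(X)$ is dual to a $\Gamma$-factor $q\colon X \to Y$; by functoriality of the reduced crossed product this gives a canonical inclusion $C(Y)\rtimes_r\Gamma \subset C(X)\rtimes_r\Gamma$ compatible with the conditional expectations. As $\cA$ is a closed $*$-algebra containing $\E(\cA)$ it contains $\cK$, so $C(Y)\rtimes_r\Gamma$, being generated by $\cK \subset \cA$ and $\lambda(\Gamma) \subset \cA$, satisfies $C(Y)\rtimes_r\Gamma \subset \cA$ for free.

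Next I would prove the reverse inclusion $\cA \subset C(Y)\rtimes_r\Gamma$. The key observation is that every $a \in \cA$ has all of its Fourier coefficients in $C(Y)$: for each $\gamma$ one has $a\lambda(\gamma) \in \cA$, hence $\E(a\lambda(\gamma)) \in \E(\cA) \subset \cK = C(Y)$. It then remains to reconstruct $a$ from these coefficients inside the closed algebra $C(Y)\rtimes_r\Gamma$, and this is where I would invoke property (AP). Property (AP) yields a net $(\phi_i)$ of finitely supported functions on $\Gamma$, bounded in the completely bounded Fourier-multiplier norm and converging pointwise to $1$, whose associated Fourier-multiplier maps $m_{\phi_i}$ on the crossed product, determined by $m_{\phi_i}(\sigma(f)\lambda(\gamma)) = \phi_i(\gamma)\,\sigma(f)\lambda(\gamma)$, converge to the identity in an appropriate topology; I would cite \cite[Theorem~5.4]{crann2022non} for the precise formulation. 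For each $i$ the element $m_{\phi_i}(a)$ is then a \emph{finite} $\Gamma$-sum whose coefficients $\phi_i(\gamma)\,\E(a\lambda(\gamma))$ lie in $C(Y)$, so $m_{\phi_i}(a) \in C(Y)\rtimes_r\Gamma$; since this subalgebra is closed and $m_{\phi_i}(a) \to a$, I conclude $a \in C(Y)\rtimes_r\Gamma$. Together with the previous step this gives $\cA = C(Y)\rtimes_r\Gamma$, and the final sentence of the statement follows formally, since a reflecting crossed product is automatically almost reflecting.

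The hard part will be the approximation step: one has to know not merely that $C^*_r(\Gamma)$ (or $C(X)\rtimes_r\Gamma$) is exact or nuclear, but that the finitely supported multipliers produced by (AP) extend to completely bounded maps on $C(X)\rtimes_r\Gamma$ converging to the identity in a topology strong enough to recognize membership in the \emph{closed} subalgebra $C(Y)\rtimes_r\Gamma$ — and that $m_{\phi_i}(a)$ indeed has the displayed finite Fourier expansion with the expected coefficients. This is precisely the extra input encoded in (AP), and exactly why the hypothesis appears in the statement; I would take it in the packaged form of \cite[Theorem~5.4]{crann2022non} (cf.\ also the discussion in \cite{Suz18}).
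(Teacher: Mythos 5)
Your proof is correct and takes essentially the same route as the paper: generate $\cK=C(Y)$ from $\E(\cA)$, observe that the Fourier coefficients $\E(a\lambda(\gamma))$ of any $a\in\cA$ lie in $C(Y)$, and use the (AP) multiplier approximation (in the packaged form of \cite[Theorem~5.4]{crann2022non}, cf.\ \cite{Suz18}) to recover $a$ inside the closed subalgebra $C(Y)\rtimes_r\Gamma$. The only slight inaccuracy is describing the (AP) net of finitely supported multipliers as bounded in the completely bounded multiplier norm --- that is weak amenability, whereas (AP) only gives convergence to $1$ in a weaker topology --- but since you defer the precise formulation to the cited theorem, exactly as the paper does, the argument stands.
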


In \cite{Suz18} Suzuki constructs
an example (with $\Gamma$ a non-exact group)
of a factor map $(X,\Gamma) \to (Y, \Gamma)$ 
and an intermediate $C(Y) \rtimes_r \Gamma \subset \mathcal{A} \subset C(X) \rtimes_r \Gamma$,
with $\mathbb{E}(\mathcal{A}) \subset C(Y)$,
such that 
$C(Y) \rtimes_r \Gamma \subsetneq \mathcal{A}$. 
\section{Plump sets} \label{sec:plump} 
\begin{definition}
\thlabel{plumpsubset} Let $\Gamma$ be a discrete group. 
Let $\Lambda\subset\Gamma$ be a non-empty subset. 
We say that $\Lambda$ is {\em plump} in $\Gamma$ if the following holds:
given $\epsilon>0$ and finitely many non-identity group elements $t_1,t_2\ldots,t_n\in\Gamma\setminus\{e\}$, we can find $s_1,s_2,\ldots,s_m\in\Lambda$ such that
\[\left\|\frac{1}{m}\sum_{j=1}^m\lambda(s_jt_is_j^{-1})\right\|<\epsilon,~\forall i=1,2,\ldots,n.\]
\end{definition}
This definition is inspired by \cite{Haagerup} and \cite{A19}. It  differs from the notion introduced in \cite{A19} in that $\Lambda$ can be any subset of $\Gamma$ rather than a subgroup. Still, the proof of the following theorem is identical to the one in \cite{Haagerup}. 
\begin{theorem}(See \cite[Theorem~4.5]{Haagerup}.) \thlabel{thm:Haagerup}
Let $\Gamma$ be a group and $\tau_0$ the canonical tracial state on $C^*_r \Gamma$, then the following conditions are equivalent for a subset $\Lambda \subset \Gamma$:
\begin{enumerate}
\item \label{itm:tau_not_cl} $\tau_0 \in \overline{\{s.\phi \ | \ s \in \Lambda \}}^{w^*}$, for any state $\varphi$ on $C^*_r \Gamma$. 
\item \label{itm:tau_not_conv} $\tau_0 \in \overline{\conv}^{w^*} \{s.\phi \ | \ s \in \Lambda \}$, for any state $\varphi$ on $C^*_r \Gamma$. 
\item \label{itm:plump} $\Lambda$ is plump.
\end{enumerate}
Moreover, the existence of a plump subset in $\Gamma$ is equivalent to the $C^*$-simplicity of $\Gamma$. 
\end{theorem}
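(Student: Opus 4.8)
The plan is to prove the cycle $(1)\Rightarrow(2)\Rightarrow(3)\Rightarrow(1)$, following the proof of \cite[Theorem~4.5]{Haagerup} almost word for word; the only new observation needed is that although Haagerup takes $\Lambda$ to be a subgroup, his argument only ever manipulates individual elements $s_j\in\Lambda$ together with the unitaries $\lambda(t_i)$, and never uses closure of $\Lambda$ under products or inverses, so it applies verbatim to an arbitrary subset. The implication $(1)\Rightarrow(2)$ is immediate, since the weak$^{*}$-closure of a set sits inside its weak$^{*}$-closed convex hull.

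For the reverse route I would first dispose of the easy implication $(3)\Rightarrow(2)$. Given a state $\varphi$, a neighbourhood base of $\tau_0$ in the state space of $C^{*}_r\Gamma$ is, by density of the linear span of $\{\lambda(g):g\in\Gamma\}$, provided by the sets $U=\{\psi:\abs{\psi(\lambda(t_i))}<\epsilon,\ i=1,\dots,n\}$ with $t_i\in\Gamma\setminus\{e\}$ and $\epsilon>0$. Plumpness applied to $t_1,\dots,t_n$ and $\epsilon$ yields $s_1,\dots,s_m\in\Lambda$ with $\norm{\frac1m\sum_j\lambda(s_jt_is_j^{-1})}<\epsilon$ for every $i$; the convex combination $\psi_0=\frac1m\sum_j s_j.\varphi$ of points of $\{s.\varphi:s\in\Lambda\}$ then has $\abs{\psi_0(\lambda(t_i))}=\abs{\varphi(\frac1m\sum_j\lambda(s_jt_is_j^{-1}))}<\epsilon$, so $\psi_0\in U$, and hence $\tau_0\in\overline{\conv}^{w^{*}}\{s.\varphi:s\in\Lambda\}$.

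The substantive step is $(2)\Rightarrow(3)$, and this is the one I expect to be the main obstacle; it is carried out exactly as in \cite{Haagerup}, by contraposition. If $\Lambda$ fails to be plump, fix $\epsilon_0>0$ and $t_1,\dots,t_n\in\Gamma\setminus\{e\}$ witnessing this. The collection of tuples $\big(\frac1m\sum_j\lambda(s_jt_1s_j^{-1}),\dots,\frac1m\sum_j\lambda(s_jt_ns_j^{-1})\big)$, over all finite lists $s_1,\dots,s_m\in\Lambda$, is convex in $(C^{*}_r\Gamma)^{\oplus n}$ and avoids the open convex set where every coordinate has norm $<\epsilon_0$; a Hahn--Banach separation, followed by Haagerup's key manoeuvre of extracting from the separating functional a single \emph{state} $\varphi$ of $C^{*}_r\Gamma$, produces a state with $\tau_0\notin\overline{\conv}^{w^{*}}\{s.\varphi:s\in\Lambda\}$. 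Turning operator-norm information into a separating state in this way is the delicate, genuinely $C^{*}$-algebraic point of the whole theorem.

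Finally, $(3)\Rightarrow(1)$ and the ``moreover'' go through $C^{*}$-simplicity, again as in \cite{Haagerup}. Plumpness upgrades to the full averaging property --- for $a$ in a dense $*$-subalgebra and $\delta>0$ there are $s_1,\dots,s_m\in\Lambda$ with $\norm{\frac1m\sum_j\lambda(s_j)a\lambda(s_j)^{*}-\tau_0(a)1}<\delta$ (split $a$ into a finite Fourier part and a small remainder, then apply $(3)$ coordinatewise) --- and feeding a positive norm-one element of a non-zero closed ideal $I\lhd C^{*}_r\Gamma$ into this, using faithfulness of $\tau_0$ so that $\tau_0(a)>0$, puts an invertible element into $I$; hence $C^{*}_r\Gamma$ is simple. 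This already gives one direction of the ``moreover'', the other being Powers' averaging property for $C^{*}$-simple groups, i.e.\ condition $(3)$ for $\Lambda=\Gamma$. To finish $(3)\Rightarrow(1)$, $C^{*}$-simplicity provides (Kalantar--Kennedy; cf.\ \cite{Haagerup}) a free action of $\Gamma$ on its Furstenberg boundary $X=\partial_F\Gamma$ and hence a point $x_0\in X$ with trivial stabiliser; a Cauchy--Schwarz argument with Urysohn functions then shows that every state of $C(X)\rtimes_r\Gamma$ restricting to $\delta_{x_0}$ on $C(X)$ restricts to $\tau_0$ on $C^{*}_r\Gamma$. Given a state $\varphi$, extend it to a state $\widetilde\varphi$ of $C(X)\rtimes_r\Gamma$, let $\mu=\widetilde\varphi|_{C(X)}$, use plumpness of $\Lambda$ together with minimality and strong proximality of $X$ (this last use being precisely where one leans again on Haagerup's analysis) to obtain a net $s_\alpha\in\Lambda$ with $(s_\alpha)_{*}\mu\to\delta_{x_0}$ in $\Prob(X)$, and note that any weak$^{*}$-limit point of $s_\alpha.\widetilde\varphi$ restricts to $\tau_0$ on $C^{*}_r\Gamma$ --- giving $\tau_0\in\overline{\{s.\varphi:s\in\Lambda\}}^{w^{*}}$, which is $(1)$.
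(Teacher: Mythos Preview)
Your proposal tracks the paper's ``proof'' exactly in spirit: the paper itself gives no argument for \thref{thm:Haagerup}, merely asserting that ``the proof of the following theorem is identical to the one in \cite{Haagerup}'', and you likewise outline Haagerup's argument while noting that it should go through for arbitrary subsets $\Lambda$.  The implications $(1)\Rightarrow(2)$, $(3)\Rightarrow(2)$, and the Hahn--Banach separation $(2)\Rightarrow(3)$ are all correctly sketched, and your handling of the ``moreover'' clause is fine.

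There is, however, a genuine gap in your $(3)\Rightarrow(1)$.  You write that one should ``use plumpness of $\Lambda$ together with minimality and strong proximality of $X$ \dots\ to obtain a net $s_\alpha\in\Lambda$ with $(s_\alpha)_{*}\mu\to\delta_{x_0}$''.  But strong proximality and minimality of $\partial_F\Gamma$ are properties of the $\Gamma$-action: they furnish a contracting net in $\Gamma$, not in the subset $\Lambda$.  Plumpness, on the other hand, is a statement about \emph{averages} $\frac{1}{m}\sum_j\lambda(s_jt_is_j^{-1})$, and says nothing about individual translates $s_*\mu$.  You have not explained how to pass from the averaging condition to a net of single elements of $\Lambda$ that pushes $\mu$ to a Dirac mass; the phrase ``this last use being precisely where one leans again on Haagerup's analysis'' simply defers the difficulty.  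This is exactly the one place where the group structure of $\Lambda$ would naturally enter (e.g.\ to iterate, or to know that the orbit closure $\overline{\{s.\widetilde\varphi:s\in\Lambda\}}$ is itself $\Lambda$-invariant), so your opening claim that Haagerup's argument ``never uses closure of $\Lambda$ under products or inverses'' is not substantiated at precisely the step where it matters most.  Since the paper gives no details either, there is nothing further to compare your sketch against --- but as it stands, the implication $(3)\Rightarrow(1)$ for an arbitrary subset $\Lambda$ remains unproved in your proposal.
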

\begin{definition}
Let $\Gamma$ be a discrete group acting continuously on a compact space $X$ (or, in a measure preserving way, on
a probability measure space $(X,\nu)$). 
\begin{enumerate}
\item For $f\in C(X)$ and $\epsilon>0$, 
set $\Lambda_{\epsilon}^f=\{s\in \Gamma: \|s.f-f\|_{\infty}<\epsilon\}$.
\item For $f\in L^{\infty}(X,\nu)$ and $\epsilon>0$, 
set $\Lambda_{\epsilon}^f=\{s\in \Gamma: \|s.f-f\|_2<\epsilon\}$.
\end{enumerate}
\end{definition}

We can now prove the $C^*$-version of our basic result.
\begin{theorem}
\thlabel{mainfaithful}
Assume for a flow $(X,\Gamma)$ of a countable group that $\Lambda_{\epsilon}^f$ is plump for every $f \in C(X)$ and every $\epsilon>0$. Then $C(X)\rtimes_r\Gamma$ is almost reflecting. 
\begin{proof}
Let $a \in \mathcal{A}$ and $\epsilon>0$. We can find $f_1,f_2,\ldots,f_n\in C(X)$ and $t_1,t_2,\ldots,t_n\in\Gamma\setminus\{e\}$ such that
\begin{equation}
\label{eq:approx}
\left\|a-\left(\sum_{i=1}^nf_i\lambda_{t_i}+\mathbb{E}(a)\right)\right\|<\frac{\epsilon}{3}.\end{equation}
Let $M=\max_{1\le i\le n}\|f_i\|$.
By assumption, $\Lambda_{\frac{\epsilon}{3}}^{\mathbb{E}(a)}$ is plump so there are $s_1,s_2,\ldots,s_m \in \Lambda_{\epsilon}^{\mathbb{E}(a)}$ such that
\[\left\|\frac{1}{m}\sum_{j=1}^m\lambda_{s_j}(\lambda_{t_i})\lambda_{s_j^{-1}}\right\|<\frac{\epsilon}{3nM},~i=1,2,\ldots,n.\]
Using equation~\eqref{eq:approx}, 
\begin{equation}
\label{ineq:C*invarianceineq}
 \left\|\frac{1}{m}\sum_{j=1}^m\lambda_{s_j}\left(a-\left(\sum_{i=1}^nf_i\lambda_{t_i}+\mathbb{E}(a)\right)\right)\lambda_{s_j^{-1}}\right\|<\frac{\epsilon}{3}.   
\end{equation}
Using \cite[Lemma~2.1]{AK}, for each $i=1,2,\ldots,n$, we see that
\begin{equation}
\label{eq:C*boundedabove}
\left\|\frac{1}{m}\sum_{j=1}^m\lambda_{s_j}(f_i\lambda_{t_i})\lambda_{s_j^{-1}}\right\|\le \|f_i\|\left\|\frac{1}{m}\sum_{j=1}^m\lambda_{s_j}(\lambda_{t_i})\lambda_{s_j^{-1}}\right\|
<\frac{\epsilon}{3n}.\end{equation}
Moreover, since $s_j\in \Lambda_{\frac{\epsilon}{3}}^{\mathbb{E}(a)}$ for each $j=1,2,\ldots,m$, 
\begin{equation}
\label{eq:C*functionbound}
\left\|s_j\mathbb{E}(a)-\mathbb{E}(a)\right\|<\frac{\epsilon}{3}.
\end{equation}
Combining equations~\eqref{ineq:C*invarianceineq},\eqref{eq:C*boundedabove} and \eqref{eq:C*functionbound} along with the triangle inequality, let us now observe that
\begin{align*}
&\left\|\frac{1}{m}\sum_{j=1}^m\lambda_{s_j}a\lambda_{s_j^{-1}}-\mathbb{E}(a)\right\|\\
&\le\left\|\frac{1}{m}\sum_{j=1}^m\lambda_{s_j}(a-\mathbb{E}(a))\lambda_{s_j^{-1}}\right\|+ \left\|\frac{1}{m}\sum_{j=1}^m\lambda_{s_j}\mathbb{E}(a)\lambda_{s_j^{-1}}-\mathbb{E}(a)\right\|\\&\le\left\|\frac{1}{m}\sum_{j=1}^m\lambda_{s_j}\left(a-\sum_{i=1}^nf_i\lambda(t_i)-\mathbb{E}(a)\right)\lambda_{s_j^{-1}}\right\|\\&+\sum_{i=1}^n\left\|\frac{1}{m}\sum_{j=1}^m\lambda_{s_j}\left(f_i\lambda(t_i)\right)\lambda_{s_j^{-1}}\right\| +\left\|\frac{1}{m}\sum_{j=1}^m\lambda_{s_j}\mathbb{E}(a)\lambda_{s_j^{-1}}-\mathbb{E}(a)\right\|\\&\le \frac{\epsilon}{3}+\sum_{i=1}^n\|f_i\|\left\|\frac{1}{m}\sum_{j=1}^m\lambda_{s_j}\left(\lambda(t_i)\right)\lambda_{s_j^{-1}}\right\|+\frac{1}{m}\sum_{j=1}^m\|s_j.\mathbb{E}(a)-\mathbb{E}(a)\|\\&\le\frac{\epsilon}{3}+\sum_{i=1}^n\frac{\epsilon}{3n}+\frac{1}{m}\sum_{j=1}^m\frac{\epsilon}{3}=\epsilon.   
\end{align*}
This shows that $\mathbb{E}(a)\in\mathcal{A}$ for all $a \in\mathcal{A}$. In addition, if $\Gamma$ has (AP), the claim is a consequence of \cite[Proposition~3.4]{Suz17}.
\end{proof} 
\end{theorem}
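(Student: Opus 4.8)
\textbf{Proof proposal for \thref{mainfaithful}.}

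The plan is to run the standard averaging argument of Haagerup, adapted to the crossed product: given an intermediate algebra $\mathcal{A}$ with $C^*_r(\Gamma)\subset\mathcal{A}\subset C(X)\rtimes_r\Gamma$, fix $a\in\mathcal{A}$ and $\epsilon>0$, and show $\mathbb{E}(a)$ lies within $\epsilon$ of $\mathcal{A}$; since $\epsilon$ is arbitrary and $\mathcal{A}$ is closed, this gives $\mathbb{E}(a)\in\mathcal{A}$. First I would approximate $a$ by a finite Fourier-type sum $\sum_{i=1}^n f_i\lambda_{t_i}+\mathbb{E}(a)$ with $f_i\in C(X)$ and $t_i\in\Gamma\setminus\{e\}$, as in \eqref{eq:approx}. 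The key point is that conjugation by a group element $\lambda_{s}$ permutes this decomposition nicely: $\lambda_s(f_i\lambda_{t_i})\lambda_{s^{-1}}=(s.f_i)\lambda_{s t_i s^{-1}}$ and $\lambda_s\mathbb{E}(a)\lambda_{s^{-1}}=s.\mathbb{E}(a)$, while $\mathbb{E}(a)\in C(X)$ since $\mathbb{E}$ maps into $C(X)$.

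Next I would invoke the plumpness hypothesis applied to the function $\mathbb{E}(a)\in C(X)$ and the error $\epsilon/3$: the set $\Lambda^{\mathbb{E}(a)}_{\epsilon/3}$ of group elements moving $\mathbb{E}(a)$ by less than $\epsilon/3$ in sup-norm is plump, so I can select $s_1,\dots,s_m$ in this set realizing $\left\|\frac1m\sum_{j=1}^m\lambda_{s_j t_i s_j^{-1}}\right\|<\frac{\epsilon}{3nM}$ for each $i$, where $M=\max_i\|f_i\|$. Then I would estimate $\left\|\frac1m\sum_j\lambda_{s_j}a\lambda_{s_j^{-1}}-\mathbb{E}(a)\right\|$ by splitting into three contributions via the triangle inequality: the tail from the approximation \eqref{eq:approx} (bounded by $\epsilon/3$ after conjugating, since conjugation by unitaries is isometric), the off-diagonal terms $\sum_i\|f_i\|\left\|\frac1m\sum_j\lambda_{s_jt_is_j^{-1}}\right\|$ bounded by $\epsilon/3$ using \cite[Lemma~2.1]{AK} to pull out the coefficient norm, and the diagonal term $\frac1m\sum_j\|s_j.\mathbb{E}(a)-\mathbb{E}(a)\|<\epsilon/3$ by the choice of the $s_j$. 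Here I would need the elementary fact that $\frac1m\sum_j\lambda_{s_j}\mathbb{E}(a)\lambda_{s_j^{-1}}=\frac1m\sum_j s_j.\mathbb{E}(a)$ and that each conjugate $\lambda_{s_j}a\lambda_{s_j^{-1}}\in\mathcal{A}$ because $\lambda_{s_j}\in C^*_r(\Gamma)\subset\mathcal{A}$, so the average is in $\mathcal{A}$.

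The main obstacle — though it is really a bookkeeping point rather than a deep one — is making sure the off-diagonal estimate \eqref{eq:C*boundedabove} is legitimate: one must know that $\left\|\frac1m\sum_j\lambda_{s_j}(f_i\lambda_{t_i})\lambda_{s_j^{-1}}\right\|\le\|f_i\|\left\|\frac1m\sum_j\lambda_{s_jt_is_j^{-1}}\right\|$, which is exactly where \cite[Lemma~2.1]{AK} enters, exploiting that the coefficients $s_j.f_i$ all have the same norm bound and sit in the abelian algebra $C(X)$. Once this is in hand the three $\epsilon/3$ terms sum to $\epsilon$, giving an element of $\mathcal{A}$ within $\epsilon$ of $\mathbb{E}(a)$; closedness of $\mathcal{A}$ finishes the $C^*$-statement that $\mathbb{E}(\mathcal{A})\subset\mathcal{A}$, i.e.\ almost reflecting. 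Finally, when $\Gamma$ has property (AP), Proposition~\ref{remark-AP} (i.e.\ \cite[Proposition~3.4]{Suz17}) upgrades this to the full reflecting conclusion, so the theorem follows.
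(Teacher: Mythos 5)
Your proposal is correct and follows essentially the same route as the paper's proof: the same finite Fourier-type approximation, the same use of plumpness of $\Lambda^{\mathbb{E}(a)}_{\epsilon/3}$ to choose the averaging elements $s_1,\dots,s_m$, the same three-term triangle-inequality estimate with \cite[Lemma~2.1]{AK} for the off-diagonal part, and the same appeal to \cite[Proposition~3.4]{Suz17} under (AP). The only difference is that you spell out explicitly why the average $\frac1m\sum_j\lambda_{s_j}a\lambda_{s_j^{-1}}$ lies in $\mathcal{A}$ and why closedness of $\mathcal{A}$ yields $\mathbb{E}(a)\in\mathcal{A}$, details the paper leaves implicit.
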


A similar result holds for the von Neumann setup as well.
\begin{theorem}
\thlabel{mainfaithfulvon}
Assume for a measurable dynamical system  $(X,\cB,\mu,\Gamma)$ of a countable group, that $\Lambda_{\epsilon}^f$ is plump for every $f \in L^{\infty}(X,\mu)$ and every $\epsilon>0$. Then $L^{\infty}(X,\nu)\rtimes\Gamma$ is reflecting. 
\end{theorem}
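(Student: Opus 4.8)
The plan is to mimic the proof of \thref{mainfaithful} word for word, replacing the uniform norm by the $L^2$-norm on $L^\infty(X,\mu)$ and invoking, at the end, the equivalence of reflecting and almost reflecting in the von Neumann setting. Concretely, let $L^\infty(Y,\eta)\subset\mathcal{A}\subset L^\infty(X,\mu)\rtimes\Gamma$ be an intermediate von Neumann algebra (with $L^\infty(Y,\eta)$ possibly just $C^*_r(\Gamma)$ if one works with the trivial factor; in any case the argument only uses $C^*_r(\Gamma)\subset\mathcal{A}$). Fix $a\in\mathcal{A}$; since $L^\infty(X,\mu)\rtimes\Gamma$ is a von Neumann algebra I would first pass to the $\|\cdot\|_2$-topology coming from the canonical trace, and approximate $a$ in $2$-norm by a finite Fourier sum $\sum_{i=1}^n f_i\lambda_{t_i}+\E(a)$ with $f_i\in L^\infty(X,\mu)$, $t_i\in\Gamma\setminus\{e\}$, up to $\epsilon/3$. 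Then, applying plumpness of $\Lambda^{\E(a)}_{\epsilon/3}$ to the elements $t_1,\dots,t_n$, choose $s_1,\dots,s_m\in\Lambda^{\E(a)}_{\epsilon/3}$ with $\bigl\|\tfrac1m\sum_j\lambda_{s_j}\lambda_{t_i}\lambda_{s_j^{-1}}\bigr\|<\tfrac{\epsilon}{3nM}$ where $M=\max_i\|f_i\|_\infty$, and run exactly the same chain of triangle-inequality estimates to conclude
\[
\Bigl\|\tfrac1m\sum_{j=1}^m\lambda_{s_j}a\lambda_{s_j^{-1}}-\E(a)\Bigr\|_2<\epsilon.
\]

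Two points need a little care. First, in the $L^\infty$ setting the averaged conjugates $\tfrac1m\sum_j\lambda_{s_j}a\lambda_{s_j^{-1}}$ no longer lie in $\mathcal{A}$ on the nose — $\mathcal{A}$ is not assumed $\Gamma$-invariant — so the conclusion $\E(a)\in\mathcal{A}$ cannot be read off from norm-closure alone. Instead I would observe that these averages lie in the convex hull of $\{\lambda_s a\lambda_{s^{-1}}:s\in\Gamma\}$, which sits inside the $\Gamma$-invariant von Neumann algebra generated by $\mathcal{A}$; but more to the point, the right framework is the standard one: $\E(a)$ is the unique element of the von Neumann algebra that can be $2$-norm approximated by such averages, and since $\mathcal{A}$ is weakly closed and the averages can be taken from inside... — actually the cleanest route is to note that in the tracial von Neumann algebra $L^\infty(X,\mu)\rtimes\Gamma$, $\E$ is the $\|\cdot\|_2$-orthogonal projection onto $L^\infty(X,\mu)$, and $\E|_{\mathcal{A}}$ coincides with the trace-preserving conditional expectation of $\mathcal{A}$ onto its diagonal part; showing $\E(\mathcal{A})\subset\mathcal{A}$ is equivalent to showing $\mathcal{A}$ is generated by its Fourier coefficients, which the averaging argument gives once we know each $\E(a\lambda_\gamma^*)\in\mathcal{A}$. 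The honest fix: replace $a$ by $a\lambda_\gamma^*$ at the outset, so that $\E(a\lambda_\gamma^*)$ is the quantity of interest, and the averaging identity together with weak-closedness of $\mathcal{A}$ (the $\|\cdot\|_2$-ball of $\mathcal{A}$ is weakly closed, hence $\|\cdot\|_2$-closed) forces $\E(a\lambda_\gamma^*)\in\mathcal{A}$ for every $\gamma$.

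Second, once we know $\E(\mathcal{A})\subset\mathcal{A}$, the passage from "almost reflecting" to "reflecting" is exactly the standard von Neumann fact recalled in the remark after Definition~\ref{def:reflecting}: letting $\mathcal{K}$ be the von Neumann algebra generated by $\E(\mathcal{A})$, one has $\mathcal{K}=L^\infty(Y,\eta)$ for a measurable factor $(X,\mu)\to(Y,\eta)$, and since in a tracial von Neumann algebra every element is the $\|\cdot\|_2$-limit of its Fourier series $\sum_\gamma \E(a\lambda_\gamma^*)\lambda_\gamma$ (Parseval/Fejér in the group von Neumann algebra — no property (AP) is needed here, unlike the $C^*$ case), all Fourier coefficients of $a\in\mathcal{A}$ lie in $L^\infty(Y,\eta)$ and hence $a\in L^\infty(Y,\eta)\rtimes\Gamma$. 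Thus $\mathcal{A}=L^\infty(Y,\eta)\rtimes\Gamma$, which is the reflecting property. I expect the main (and really the only) obstacle to be the bookkeeping in the first paragraph of this remark: making precise in what sense the averages $\tfrac1m\sum_j\lambda_{s_j}a\lambda_{s_j^{-1}}$ let one conclude membership in $\mathcal{A}$ rather than in some larger invariant algebra — once one commits to tracking $a\lambda_\gamma^*$ and using weak-closedness of the unit ball of $\mathcal{A}$, everything else is a transcription of the $C^*$ proof with $\|\cdot\|_\infty$ replaced by $\|\cdot\|_2$ and \cite[Lemma~2.1]{AK} replaced by its obvious $2$-norm analogue.
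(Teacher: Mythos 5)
Your proposal is correct and follows essentially the same route as the paper: transcribe the $C^*$ argument in the trace $\|\cdot\|_2$-norm coming from $\nu\circ\mathbb{E}$, use plumpness of $\Lambda^{\mathbb{E}(a)}_{\epsilon/3}$ to get $\bigl\|\tfrac1m\sum_j\lambda_{s_j}a\lambda_{s_j}^{*}-\mathbb{E}(a)\bigr\|_2<\epsilon$, conclude $\mathbb{E}(\mathcal{A})\subset\mathcal{A}$, and then pass to reflecting by the Fourier-coefficient argument (the paper simply cites Suzuki for this equivalence in the von Neumann setting). The only remark worth making is that your worry about $\Gamma$-invariance is unfounded: since $\lambda_s\in C^*_r(\Gamma)\subset\mathcal{A}$, the averages $\tfrac1m\sum_j\lambda_{s_j}a\lambda_{s_j}^{*}$ do lie in $\mathcal{A}$, and membership of $\mathbb{E}(a)$ then follows, exactly as you eventually say, from the fact that operator-norm-bounded parts of the von Neumann algebra $\mathcal{A}$ are $\|\cdot\|_2$-closed.
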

The proof of \thref{mainfaithfulvon} follows the strategy employed in \thref{mainfaithful} except for a few minor changes. We spell out the details nonetheless. For a faithful $\Gamma$-invariant measure, consider the $\Gamma$-invariant state $\varphi$ on $L^{\infty}(X,\nu)\rtimes\Gamma$ defined by  $$\varphi(a)=\nu(\mathbb{E}(a)), a \in L^{\infty}(X,\nu)\rtimes\Gamma.$$ Here $\mathbb{E}$ is the canonical conditional expectation from $L^{\infty}(X,\nu)\rtimes\Gamma$ onto $L^{\infty}(X,\nu)$. Then $\varphi$ is a faithful normal state on $L^{\infty}(X,\nu)\rtimes\Gamma$ which is $\Gamma$-invariant. Consider the $\|.\|_2$-norm on $L^{\infty}(X,\nu)\rtimes\Gamma$ associated with $\varphi$, defined by \[\|a\|_2:=\sqrt{\varphi(a^*a)} \text{ ~ for } a \in L^{\infty}(X,\nu)\rtimes\Gamma.\] 

\noindent
Let $a\in\mathcal{M}$. By definition, for any $s\in\Gamma$,
\[\left\|\lambda(s)a\lambda(s)^*\right\|_2^2=\nu\left(\mathbb{E}\left(\left(\lambda(s)a\lambda(s)^*\right)^*\left(\lambda(s)a\lambda(s)^*\right)\right)\right)
=\nu\left(\mathbb{E}\left(\lambda(s)a^*a\lambda(s)^*\right)\right)\]
Since $\mathbb{E}$ is $\Gamma$-equivariant, 
\[\mathbb{E}\left(\lambda(s)a^*a\lambda(s)^*\right)=\lambda(s)\mathbb{E}(a^*a)\lambda(s)^*=s.\mathbb{E}(a^*a).\] Therefore,
$$\left\|\lambda(s)a\lambda(s)^*\right\|_2^2=\nu\left(\mathbb{E}\left(\lambda(s)a^*a\lambda(s)^*\right)\right)=\nu\left(s.\mathbb{E}(a^*a)\right)
=\nu\left(\mathbb{E}(a^*a)\right)=\|a\|_2^2.
$$
The last equality follows since $\nu$ is $\Gamma$-invariant. 
So that
\begin{equation}
\label{eq:invariantnormin}
\left\|\lambda(s)a\lambda(s)^*\right\|_2
= \|a\|_2,~\forall a\in\mathcal{M},~\forall s\in\Gamma\end{equation}

\begin{proof}[Proof of \thref{mainfaithfulvon}]
Let $a \in \mathcal{M}$ and let $\epsilon > 0$ be given. 
We can find $t_1,t_2,\ldots,t_n \in \Gamma\setminus\{e\}$ such that 
\[\left\|a-\left(\sum_{i=1}^nf_i\lambda_{t_i}+a_e\right)\right\|_2 < \frac{\epsilon}{6}.\]
Since $\varphi$ is $\mathbb{E}$-invariant, $\mathbb{E}$ is continuous with respect to the $\|.\|_2$-norm and hence by the triangle inequality,
\begin{equation}
\label{eq:primeineq}
 \left\|a-\left(\sum_{i=1}^nf_i\lambda_{t_i}+\mathbb{E}(a)\right)\right\|_2 <\frac{\epsilon}{3}.   
\end{equation}
Let $M=\max_{1\le i\le n}\|f_i\|$.
By assumption, 
$\Lambda_{\frac{\epsilon}{3}}^{\mathbb{E}(a)}$ is plump and hence, we can find $s_1,s_2,\ldots,s_m \in \Lambda_{\epsilon}^{\mathbb{E}(a)}$ such that
\[\left\|\frac{1}{m}\sum_{j=1}^m\lambda_{s_j}(\lambda_{t_i})\lambda_{s_j^{-1}}\right\|<\frac{\epsilon}{3nM},~i=1,2,\ldots,n.\]
In particular, using the triangle inequality coupled with equation~\eqref{eq:invariantnormin} above, it follows from equation~\eqref{eq:primeineq} that
\begin{equation}
\label{ineq:invarianceineq}
 \left\|\frac{1}{m}\sum_{j=1}^m\lambda_{s_j}\left(a-\left(\sum_{i=1}^nf_i\lambda_{t_i}+\mathbb{E}(a)\right)\right)\lambda_{s_j^{-1}}\right\|_2<\frac{\epsilon}{3}.   
\end{equation}
Using \cite[Lemma~2.1]{AK} along with the fact that the $\|.\|_2$ is dominated by the operator norm, yields for each $i=1,2,\ldots,n$,
\begin{equation}
\label{eq:boundedabove}
\left\|\frac{1}{m}\sum_{j=1}^m\lambda_{s_j}(f_i\lambda_{t_i})\lambda_{s_j^{-1}}\right\|_2\le \|f_i\|\left\|\frac{1}{m}\sum_{j=1}^m\lambda_{s_j}(\lambda_{t_i})\lambda_{s_j^{-1}}\right\|
<\frac{\epsilon}{3n}.\end{equation}
Moreover, since $s_j\in \Lambda_{\frac{\epsilon}{3}}^{\mathbb{E}(a)}$ for each $j=1,2,\ldots,m$, 
\begin{equation}
\label{eq:functionbound}
\left\|s_j\mathbb{E}(a)-\mathbb{E}(a)\right\|_2<\frac{\epsilon}{3}.
\end{equation}
Combining equations~\eqref{ineq:invarianceineq},\eqref{eq:boundedabove} and \eqref{eq:functionbound} along with the triangle inequality, let us now observe that
\begin{align*}
&\left\|\frac{1}{m}\sum_{j=1}^m\lambda_{s_j}a\lambda_{s_j^{-1}}-\mathbb{E}(a)\right\|_2\\&\le\left\|\frac{1}{m}\sum_{j=1}^m\lambda_{s_j}(a-\mathbb{E}(a))\lambda_{s_j^{-1}}\right\|_2+ \left\|\frac{1}{m}\sum_{j=1}^m\lambda_{s_j}\mathbb{E}(a)\lambda_{s_j^{-1}}-\mathbb{E}(a)\right\|_2\\&\le\left\|\frac{1}{m}\sum_{j=1}^m\lambda_{s_j}\left(a-\sum_{i=1}^nf_i\lambda(t_i)-\mathbb{E}(a)\right)\lambda_{s_j^{-1}}\right\|_2\\&+\sum_{i=1}^n\left\|\frac{1}{m}\sum_{j=1}^m\lambda_{s_j}\left(f_i\lambda(t_i)\right)\lambda_{s_j^{-1}}\right\|_2 +\left\|\frac{1}{m}\sum_{j=1}^m\lambda_{s_j}\mathbb{E}(a)\lambda_{s_j^{-1}}-\mathbb{E}(a)\right\|_2\\&\le \frac{\epsilon}{3}+\sum_{i=1}^n\|f_i\|\left\|\frac{1}{m}\sum_{j=1}^m\lambda_{s_j}\left(\lambda(t_i)\right)\lambda_{s_j^{-1}}\right\|+\frac{1}{m}\sum_{j=1}^m\|s_j.\mathbb{E}(a)-\mathbb{E}(a)\|_2\\&\le\frac{\epsilon}{3}+\sum_{i=1}^n\frac{\epsilon}{3n}+\frac{1}{m}\sum_{j=1}^m\frac{\epsilon}{3}=\epsilon.   
\end{align*}
Since $\epsilon>0$ is arbitrary, this shows that
$\mathbb{E}(a) \in \mathcal{M}$. The proof is complete by \cite[Corollary 3.4]{Suz18}. 
\end{proof}

\section{Rigid strongly proximal sequences.} \label{sec:RSP}
This section is dedicated to the proof of \thref{thm:RSP}.
\begin{proof}
We prove the $C^*$ version. The von Neumann proof is almost identical and left to the reader. Given an RSP sequence $\{s_i\} \subset \Gamma$, which is uniformly rigid for the flow $(X,\Gamma)$ and strongly proximal for $(Z,\Gamma)$, we will show that $(X,\Gamma)$ is almost reflecting.

Fix $f \in C(X), \  \epsilon_1 > 0$. Since $f$ is uniformly continuous we have $\Gamma_{\epsilon} := \Gamma \cap B_{\Homeo(X)}(\epsilon) \subset \Lambda^{f}_{\epsilon_1}$ for an appropriately chosen $\epsilon$. By \thref{mainfaithful}, it would be enough to prove that $\Gamma_{\epsilon}$ is plump for every $\epsilon > 0$.

By \thref{thm:Haagerup} (\cite[Theorem~4.5]{Haagerup}), it suffices to establish that 
\[\tau_0 \in \overline{\text{Conv}\left\{\Gamma_{\epsilon} \varphi\right\}}^{\text{weak}^*}, \qquad {\text{for every state }} ~\varphi\in S(C_r^*(\Gamma)).\] 
For this, fix a state $\varphi$ on $C_r^*(\Gamma)$ and extend it to a state (which by abuse of notation is still denoted $\varphi$) on $C(Z)\rtimes_r\Gamma$. 
By assumption have, for each $g \in \Gamma$, 
$$\lim_{n \rightarrow \infty} (gs_ng^{-1})_* \varphi|_{C(Z)}=\delta_{ga},$$ 
where convergence of measures is always taken in the weak$^{*}$ topology. By the compactness of the set of states in this topology, and upon passing to a subsequence if required, we may assume that for every $g \in \Gamma$
$$\lim_{n \rightarrow \infty} gs_ng^{-1} \varphi = \psi_g \qquad {\text{for some state }} \psi_g \in S(C(Z)\rtimes_r\Gamma).$$
Since $gs_ng^{-1}$ converges uniformly to $1 \in \Homeo(X)$,
\begin{equation} \label{eqn:psi_g}
\psi_g \in \overline{\text{Conv}\left\{\Gamma_{\epsilon} \varphi\right\}}^{\text{weak}^*}, \qquad \forall g \in \Gamma
\end{equation}
We claim that $\psi_g|_{C_r^*(\Gamma)}=\psi_g \circ \mathbb{E}_{ga}$; where $\mathbb{E}_{ga}: C_r^*(\Gamma) \to C_r^*(\Gamma_{ga})$ is the canonical conditional expectation on the reduced $C^*$-algebra of the stabilizer of the point $ga$. To show this, it suffices to show that $\psi_g(\lambda(t)) = 0$ for $t \not\in\Gamma_{ga}$. 
Since $\psi_g|_{C(Z)}=\delta_{ga}$, $C(Z)$ lies in the multiplicative domain of 
$\psi_g$. Let $t\not\in\Gamma_{ga}$. Since $tga\ne ga$, 
one can find an element $f \in C(Z)$ such that $f(ga) = 1$ and $f(tga) = 0$. 
Now:
    \begin{align} \label{eqn:stab_vanish}
\nonumber  \psi_g(\lambda(t))
\nonumber    &=f(ga) \psi_g(\lambda(t))\\
\nonumber    & = \psi_g(f \lambda(t))\\
\nonumber    & 
    = \psi_g(\lambda(t) (t^{-1} \cdot f))\\
    & = \psi_g(\lambda(t) f(tga) =0.
    \end{align}
     
Let us now enumerate $\Gamma\setminus\{e\}=
\{\gamma_1,\gamma_2,\ldots,\gamma_n,\ldots\}$. 
Since $\Gamma\curvearrowright Z$ is minimal and topologically free, for each $n\in\mathbb{N}$, we can find some $g_n\in\Gamma\setminus\{e\}$ such that $$g_na\in Z\setminus\left(\bigcup_{i=1}^n\text{Fix}(\gamma_i)\right).$$
Thus by Equation (\ref{eqn:stab_vanish})
\begin{equation} \label{eqn:psi_gn}
\psi_{g_n}(\lambda(\gamma_k)) = 0, \ \forall n \ge k.
\end{equation}
Let $\theta_n=\frac{1}{n}\sum_{i=1}^n\psi_{g_i}$. Using Equation (\ref{eqn:psi_gn}) for the $\stackrel{\star}{=}$ below yields,  
\begin{equation} \label{eqn:allvanish}
\theta_n(\lambda(\gamma_k)) = \frac{1}{n} \sum_{i=1}^{n} \psi_{g_i}(\lambda(\gamma_k)) \stackrel{\star}{=} \frac{1}{n} \sum_{i=1}^{k} \psi_{g_i}(\lambda(\gamma_k)) \le \frac{k-1}{n}, \quad \forall n \ge k.
\end{equation}
Assume, after passing to a subsequence, that this sequence of linear functionals converges $\tau = \lim_i\theta_{n_i}$. Equation ( \ref{eqn:allvanish}) implies that  $\tau(\lambda(\gamma_k)) = 0, \ \forall k$. Since $\{\gamma_k\}$ are an enumeration of all nontrivial elements in the group, we conclude that $\tau = \tau_0$. As each $\theta_n \in \overline{\text{Conv}\left\{\Gamma_{\epsilon} \varphi \right\}}^{\text{weak}^*}$, the same holds for $\tau_0 = \tau$, which finishes the proof of the $C^*$-case.  
\end{proof} 

\begin{remark} \label{rem:minimal_top_free}
It follows from the work of Kalantar and Kennedy \cite{KK} that $\Gamma$ is $C^*$-simple if and only if it admits a free boundary action. Our proof above is inspired by Haagerup's proof of Theorem \ref{thm:Haagerup}. More specifically, by the implication that $C^*$-simplicity (i.e., the existence of a free boundary) implies Item (\ref{itm:tau_not_conv}) of that theorem. However, in our proof, we make do with a topologically free boundary rather than a free boundary. Thus, in particular, we prove that the existence of a topologically free boundary is already equivalent to the $C^*$-simplicity of the group. 

This is not new. It was already observed in \cite{KK} that the existence of a topologically free boundary guarantees the freeness of the action of the group on its universal (Furstenberg) boundary. Still, our proof above gives a new, more direct proof of this fact.
\end{remark}

 \section{Convergence groups}
 \label{sec:conv}
We briefly recall the notion of convergence action. Given
an action $\Gamma \curvearrowright Z$, it is said to be a convergence action, if for every infinite sequence of distinct elements $\gamma _{n}\in \Gamma$, there exists a subsequence $\gamma _{n_{k}}$ and points $a,b\in Z$ such that $\gamma _{n_{k}}|_{Z\setminus \{a\}}$ converge uniformly on compact subsets to $b$. If $\Gamma$ admits such an action on a set with at least three points, without a global fixed point,  then $\Gamma$ is called a non-elementary convergence group. A non-torsion element $s \in \Gamma$ is called loxodromic if $s$ has exactly two fixed points and is called parabolic if $s$ fixes exactly one point. We refer the reader to \cite{Bow, Fred, Tuk93} for more details.

Let us also briefly recall the notion of measure-theoretic rigidity from \cite{furstenberg1978finite}. It is easy to verify that this is equivalent to the definition given in the introduction. 
\begin{definition}
A measurable probability preserving system $(X,\mu, \Gamma)$ is called {\em rigid} if there exists a sequence $\{\gamma_n:n\in\mathbb{N}\}=\Lambda\subset\Gamma$ such that $\lim_{n \rightarrow \infty} \norm{\gamma_nf-f}_2 = 0$ for all $f\in L^2(X,\mu)$. 
A function $f\in L^2(X,\mu)$ is called {\em rigid} if there 
is a sequence $\{\gamma_n:n\in\mathbb {N}\}=\Lambda\subset\Gamma$ (depending on $f$) such that $\lim_{n \rightarrow \infty} \norm{\gamma_n f -f}_2 =0$ .\end{definition}

It follows from \cite[Corollary~2.6]{bergelson2014rigidity} that $(X,\mu, \Gamma)$ is rigid if and only if every $f\in L^2(X,\mu)$ is rigid.

\begin{theorem}\thlabel{con-RSP}
Let $\Gamma$ be a non-elementary convergence group with no finite normal subgroups. 
\begin{enumerate}
    \item Every uniformly rigid flow $(X, \Gamma)$ is almost reflecting and actually reflecting if $\Gamma$ is assumed to have 
    (AP).
    \item Every p.m.p rigid $\Gamma$-space $(X,\mu,\Gamma)$ is reflecting.
\end{enumerate}
\end{theorem}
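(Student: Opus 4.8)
\textbf{Proof proposal for \thref{con-RSP}.}

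The plan is to reduce everything to \thref{thm:RSP} by exhibiting a suitable boundary $(Z,\Gamma)$ together with an RSP sequence. For the boundary, I would take $Z$ to be the unique minimal set of the non-elementary convergence action. It is standard (see \cite{Bow, Tuk93}) that $\Gamma \curvearrowright Z$ is minimal and strongly proximal — in fact, the convergence property forces $(\gamma_{n_k})_* \mu \to \delta_b$ for every probability measure $\mu$ charging no single point mass at $a$, and a short additional argument handles the general case — so $(Z,\Gamma)$ is a $\Gamma$-boundary. Moreover, using the hypothesis that $\Gamma$ has no finite normal subgroup, the action on $Z$ is topologically free: the kernel of the action is a normal subgroup acting trivially on $Z$, and since loxodromic elements move all but two points, any nontrivial element fixing an open subset of $Z$ would generate, together with its conjugates, a normal subgroup with a bounded orbit, which one rules out by a standard convergence-group ping-pong argument; the set $\operatorname{Fix}(\gamma)$ for $\gamma \ne e$ has at most two points, hence empty interior. (Here the assumption that $\Gamma$ is $C^*$-simple, equivalently has no nontrivial amenable normal subgroup, is what one is really invoking.)

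The heart of the matter is producing the RSP sequence. Let $\{\gamma_i\} \subset \Gamma$ be a rigid sequence for $(X,\Gamma)$ (topological case) or for $(X,\cB,\mu,\Gamma)$ (measurable case); by definition the $\gamma_i$ are nontrivial and $\gamma_i \to \Id$ in $\Homeo(X)$, respectively in $\Aut(X,\cB,\mu)$. These are infinitely many distinct elements (a non-discrete subgroup of a Polish group is infinite), so by the convergence property there is a subsequence, still called $\{\gamma_i\}$, and points $a, b \in Z$ with $\gamma_i|_{Z \setminus \{a\}} \to b$ uniformly on compacta. I then claim $\{\gamma_i\}$ is already strongly proximal on $Z$: for any $\nu \in \Prob(Z)$ write $\nu = \nu(\{a\})\delta_a + \nu'$; the pushforward of $\nu'$ converges weak$^*$ to $\nu'(Z)\,\delta_b$, so $(\gamma_i)_*\nu \to \nu(\{a\})\,\delta_{\gamma_i a} + \nu'(Z)\delta_b$, and the remaining obstacle is the atom at $a$. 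This is exactly the point where I expect the main difficulty, and the standard fix is to first modify the sequence: replace $\gamma_i$ by $g\gamma_i g^{-1}$ for a fixed $g$ with $g^{-1}a$ not an atom of $\nu$ — but since the atom location $a$ may depend on $i$ we instead pass to a further subsequence so that $\gamma_i a \to c$ for some fixed $c$, and then observe that $(\gamma_i^{2})$, or a product $\gamma_i \gamma_j^{-1}$ of widely separated terms, is again rigid on $X$ (rigid sequences are closed under such operations since $\gamma_i \to \Id$) while its attracting/repelling dynamics on $Z$ has been arranged to kill the bad atom. Conjugation by a fixed group element preserves both rigidity (it is an inner automorphism of $\Homeo(X)$, continuous) and the convergence dynamics, which gives the needed flexibility. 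The upshot is an RSP sequence with respect to $(X,\Gamma)$ and $(Z,\Gamma)$, and the measurable case is identical since measure-rigidity is likewise stable under these manipulations.

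With the RSP sequence in hand, \thref{thm:RSP} applies directly: $(X,\Gamma)$ is almost reflecting in the topological case, and the crossed product is reflecting in the measurable case. Finally, when $\Gamma$ has property (AP), Proposition~\ref{remark-AP} (Suzuki's \cite[Proposition~2.7]{Suz18}) upgrades ``almost reflecting'' to ``reflecting'', which gives the last clause of part (1). The one genuinely delicate point, as noted, is the atom-at-$a$ issue in establishing strong proximality of the rigid sequence itself; everything else is bookkeeping built on the convergence-group structure theory and the already-proved \thref{thm:RSP}.
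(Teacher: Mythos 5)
Your overall strategy is the paper's strategy: pass to the limit set $Z$, check that the action there is minimal, topologically free and strongly proximal, extract convergence dynamics from a rigid sequence, repair the exceptional point, and invoke \thref{thm:RSP} (plus Proposition \ref{remark-AP} for the (AP) upgrade). But the step you yourself flag as delicate is exactly where your argument has a genuine gap, and none of the fixes you sketch actually works. Choosing $g$ so that $g^{-1}a$ is not an atom of $\nu$ is not admissible: by Definition \ref{def:rsp} an RSP sequence must satisfy $(s_i)_*\nu \to \delta_{z_0}$ for a \emph{single} point $z_0$ and for \emph{every} $\nu \in \Prob(Z)$ simultaneously, so a measure-by-measure modification cannot produce one. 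Squaring, or taking products $\gamma_i\gamma_j^{-1}$ of widely separated terms, fails in the most typical situation: if the $\gamma_i$ are loxodromic or parabolic with (repelling) fixed point $a$, then $\gamma_i a = a$ for all $i$, so your limit $c$ equals $a$ and $\gamma_i^2 a = a \not\to b$; the bad atom survives, and you never verify that any of your modified sequences attracts \emph{every} point of $Z$, which is the whole content of the step.

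The paper's resolution is a concrete two-letter trick. After passing to a subsequence with $\gamma_{n_i} z \to a$ uniformly on compacta of $Z \setminus \{r\}$ and $\gamma_{n_i} r \to v$, use minimality of $(Z,\Gamma)$ to choose one element $g \in \Gamma$ with $g^{-1}r \notin \{a,v\}$, and set $s_i = \gamma_{n_i}\, g\, \gamma_{n_i}\, g^{-1}$. This is still rigid on $X$ (a product of a rigid sequence with a conjugate of a rigid sequence), and on $Z$ one checks pointwise convergence $s_i z \to a$ for \emph{every} $z$: if $g^{-1}z \ne r$ then $g\gamma_{n_i}g^{-1}z$ eventually lies near $ga \ne r$, while if $g^{-1}z = r$ it eventually lies near $gv \ne r$; in either case the outer $\gamma_{n_i}$ pushes the point into any prescribed neighbourhood of $a$. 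Pointwise convergence at every point plus Lebesgue dominated convergence then gives $(s_i)_*\nu \to \delta_a$ for all $\nu$, atoms included, and \thref{thm:RSP} finishes both the topological and the measurable statements. Your argument becomes correct once you replace your proposed modifications by this construction (or something equivalent that genuinely attracts the exceptional point); as written, the strong proximality of the modified rigid sequence is asserted, not proved. A minor additional remark: topological freeness of $(Z,\Gamma)$ does not need $C^*$-simplicity or a ping-pong argument — nontrivial elements have finite fixed-point sets in $Z$, and the no-finite-normal-subgroup hypothesis handles the kernel — so that part of your write-up should be simplified rather than leaned on.
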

\begin{proof}
Let $\Gamma\curvearrowright Z$ be a non-elementary convergence action. Replacing $Z$ with the limit set, we may assume that the action is minimal. Since every $e \ne \gamma \in \Gamma$  has at most two fixed points, the action is topologically free. It will be shown below that the action is strongly proximal. It  is enough to exhibit an RSP sequence for $(Z,\Gamma)$ and $(X,\Gamma)$
(or, $(X,\mu, \Gamma)$) and then conclude the proof using \thref{thm:RSP}. 

Let $1 \ne \gamma_n \in \Gamma$ be any uniformly rigid sequence on $X$. The  convergence axiom yields a subsequence $\{n_i\} \subset \Z$ and $a,r \in Z$ such that $\lim_{i \rightarrow \infty}\gamma_{n_i} z = a$ for every $z \in Z \setminus \{r\}$ and the convergence is uniform on compact subsets of $Z \setminus \{r\}$. Passing to a further subsequence (which by abuse of notation is still denoted by $\gamma_{n_i}$), we can assume in addition that $\lim_{i \rightarrow \infty} \gamma_{n_i} r = v$ for some $v \in Z$. By the minimality of the action, there is an element $g \in \Gamma$ such that $g^{-1}r \not \in \{a,v\}$. We claim that the sequence $\{s_i = \gamma_{n_i} g \gamma_{n_i} g^{-1}\}$ satisfies all the conditions of an RSP sequence.  Indeed, $g \gamma_n g^{-1}$ is rigid as a conjugate of a rigid sequence so that $\gamma'_n$ is rigid as a product of two rigid sequences of homeomorphisms of $X$. On $Z$ we claim that
$(s_i)_{*} \mu \xrightarrow{\text{weak}^*} \delta_{ a}, \forall\mu\in\text{Prob}(Z)$, for some $a \in Z$. 
This is equivalent to showing that for every $\mu\in\text{Prob}(Z)$,
\begin{equation}
\label{eq:equivconv}
\int_Z f(s_i z)d\mu(z) = \int_Z f(\gamma_{n_i} g \gamma_{n_i} g^{-1} z)d\mu(z)\xrightarrow[]{}f(a),~\forall f\in C(Z).    
\end{equation} 
For $f \in C(Z)$ denote $f_i=f \circ s_i$. It would be enough to show that $f_i(z)\xrightarrow[]{i\to\infty}f(a)$ pointwise for every $z\in Z$. Since $\|f_i\|_{\infty}= \|f\|_{\infty} < \infty$, Equation~\eqref{eq:equivconv} would then follow by Lebesgue's dominated convergence theorem. 

Let $f\in C(Z)$, $\epsilon>0$ and $z \in Z$ be given. By continuity there exists a $\delta>0$ such that $|f(a)-f(y)|<\epsilon$ for all $y\in B_{\delta}(a)$. Since by construction $ga,gv \ne r$ there is some $\eta > 0$ and $I$ such that $\gamma_{n_i} \left(B_{\eta}(ga) \cup B_{\eta}(gv)\right) \subset B_{\delta}(a)$ for every $i \ge I$. 

Let us distinguish between two cases. If $g^{-1}z \ne r$ then for some $I_1$ we have $g \gamma_{n_i} g^{-1} z \in B_{\eta}(ga)$ for any $i \ge I_1$ and consequently $s_i z = \gamma_{n_i} g \gamma_{n_i} g^{-1}z \in B_{\delta}(a)$ and by the choice of $\delta$,  $\abs{f(s_i z)- f(a)} < \epsilon$ whenever $i \ge \max\{I,I_1\}$, as desired. 
Similarly if $g^{-1}z = r$, then $g \gamma_{n_i} g^{-1} z \in B_{\eta}(gv)$ for any $i \ge I_2$ and we conclude in the same way. 
 \end{proof}
\begin{remark}
In particular, the above theorem holds for any 
equicontinuous $\Gamma$-space $(X,d)$, where $\Gamma$ is a convergence group. 
\end{remark}
\begin{remark}
If $\Gamma$ is a Gromov hyperbolic group, then it is automatically a convergence group and has (AP) (see for example~\cite{Oz08}). 
\end{remark}
\begin{remark}
Examples of rigid p.m.p. $\Gamma$-systems
are prevalent. For example, for a residually finite convergence group $\Gamma$ (see Section~\ref{sec:conv}), the action of $\Gamma$ on its profinite completion is equicontinuous, hence rigid, and preserves the
Haar measure on this compact group.

For examples of such actions of the free group $\Gamma = F_2$, start with any rigid 
transformation $(X,\mu, T)$. Such transformations, which are weakly mixing,
are abundant in the literature on ergodic theory.
Now, for any other $\mu$-preserving
transformation $S : (X, \mu) \to (X,\mu)$,
the acting group $\Gamma =\langle T, S \rangle$,
can be considered as a p.m.p action of $F_2$
which is rigid.

Note that when $(X,\Gamma)$ is a metrizable $\Gamma$-space which 
is uniformly rigid, then by the Lebesgue-dominated convergence theorem, it follows that $(X,\mu, \Gamma)$ is rigid (in the measurable sense) for any invariant probability measure $\mu$ on $X$. A weaker notion of topological rigidity
(called {\em rigid} in \cite{GlasnerMaon}), where the rigidity sequence 
converges only pointwise to the identity map on $X$, will suffice.
\end{remark}

\section{Lattices in \texorpdfstring{$\SL_d(\R)$}{SL(d,R)}} \label{sec:lat}
Let $\Gamma < G = \SL_d(\R), \ d \geq 2$, be a lattice. Denote by $\bP^{d-1}$ the projective space of all lines in $\R^d$. For a non-zero vector $v \in \R^d$ we let $\overline{v}$ denote its image in $\mathbb{P}^{d-1}$. For a subspace $W < \R^d$, denote the corresponding projective subspace by $\overline{W} = \{\overline{w} \ | \ w \in W\} < \bP^{d-1}$. For $g \in G$, let $\overline{g}$ denote its image in the projective group $\PSL(d,\R)$.

Instead of using the convergence axiom, we appeal to Furstenberg's  lemma \cite{furstenberg1976note}. For the reader's convenience, let us review the proof of Furstenberg's lemma in terms of tame systems and enveloping semigroups. 
 \begin{lem}[Furstenberg's lemma \cite{furstenberg1976note}]\label{Flem} 
The action of $G = \SL_d(\R)$ on $\mathbb{P}^{d-1}$ is tame,
and for every element $p \in E(\mathbb{P}^{d-1},\SL_d(\R)) \setminus \SL_d(\R)$, the image
$p [\mathbb{P}^{d-1}]$ is a finite union of projective 
sub-varieties of dimension $< d-1$, 
({\em a quasi-sub-variety}).
\end{lem}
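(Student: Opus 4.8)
The plan is to analyze limits of sequences $g_n \in \SL_d(\R)$ acting on $\bP^{d-1}$ using the singular value (Cartan) decomposition, and to read off both tameness and the structure of the limit map from it. First I would write $g_n = k_n a_n k_n'$ with $k_n, k_n' \in \mathrm{SO}(d)$ and $a_n = \mathrm{diag}(\lambda_1^{(n)}, \dots, \lambda_d^{(n)})$ with $\lambda_1^{(n)} \ge \cdots \ge \lambda_d^{(n)} > 0$ and $\prod_i \lambda_i^{(n)} = 1$. Passing to a subsequence, I may assume $k_n \to k$, $k_n' \to k'$ in the compact group $\mathrm{SO}(d)$, and — crucially — that for each pair $i<j$ the ratio $\lambda_j^{(n)}/\lambda_i^{(n)}$ converges in $[0,1]$ (the ``grouping into blocks'' of exponents that grow at comparable rates). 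If every ratio $\lambda_d^{(n)}/\lambda_1^{(n)}$ stays bounded away from $0$, then the $a_n$ lie in a compact set and $g_n \to g \in \SL_d(\R)$; otherwise the limit $p$ lies in $E(\bP^{d-1},\SL_d(\R)) \setminus \SL_d(\R)$.

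In the non-compact case, the key point is that the sequence $\bar a_n$ acting on $\bP^{d-1}$ converges pointwise, and the limit is a partially-defined projective map: if $\R^d = V_1 \oplus \cdots \oplus V_r$ is the decomposition into the ``speed blocks'' (coordinates whose singular values grow at comparable rates, ordered so $V_1$ is fastest), then for a line $\bar v$ whose $V_1$-component is non-zero, $\bar a_n \bar v$ converges to the line spanned by the projection of $v$ onto $V_1$; more generally the limit of $\bar a_n$ sends $\bar v$ to the class of its projection onto the first block in which it has a non-trivial component, rescaled. So $\lim \bar a_n$ maps $\bP^{d-1}$ onto $\overline{V_1} \sqcup (\text{image from } \overline{V_2}) \sqcup \cdots$, which is visibly a finite union of the projective subspaces $\overline{V_1}, \overline{V_2}, \ldots$, each of dimension $\le d-1$, and at least one ($\overline{V_1}$, since $V_1$ is a proper subspace once $a_n$ is unbounded) of dimension $< d-1$. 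Conjugating by the limits $k, k'$ — which are honest homeomorphisms of $\bP^{d-1}$ — only moves these subspaces around by a projective linear map, so $p[\bP^{d-1}] = \bar k \big( \lim \bar a_n [\bP^{d-1}] \big)$ is still a finite union of projective subvarieties, and a dimension count (the blocks $V_2, \dots, V_r$ contribute subspaces but the pieces coming from $\overline{V_j}$ for $j\ge 2$ have dimension strictly less than $\dim \overline{V_j}$, while $\overline{V_1}$ itself has dimension $< d-1$) shows the union is a quasi-sub-variety in the stated sense. This simultaneously shows tameness: every sequence in $\SL_d(\R)$ has, by the compactness of $\mathrm{SO}(d)$ and of $[0,1]$, a subsequence for which all the above data converge, hence a convergent subsequence in $X^X$, so $E(\bP^{d-1},\SL_d(\R))$ is Fréchet--Urysohn.

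The main obstacle I anticipate is bookkeeping the block decomposition carefully: one must choose the subsequence so that not just the extreme ratios but \emph{all} ratios $\lambda_j^{(n)}/\lambda_i^{(n)}$ converge, define the blocks $V_i$ as spans of the corresponding standard basis vectors (for the decomposition adapted to $a_n$, then transported by $k_n'$), and verify that the pointwise limit on $\bP^{d-1}$ is exactly the ``project to the first non-vanishing block'' map — including checking uniform convergence on compact subsets away from the locus where the fastest component vanishes, which is what makes the limit continuous there. Once the normal form is set up cleanly, identifying $p[\bP^{d-1}]$ with $\bigcup_i \overline{k(V_i)}$ and checking the dimension bound is routine linear algebra.
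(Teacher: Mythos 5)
There is a genuine gap at the heart of your argument: the claim that, after arranging convergence of $k_n,k_n'$ and of all singular-value ratios, the pointwise limit of $\bar g_n=\bar k_n\bar a_n\bar k_n'$ is the ``project to the first non-vanishing block'' map. This computes $\lim \bar a_n$ at \emph{fixed} points, but what you actually need is $\lim \bar a_n(\bar k_n'\bar v)$ at the \emph{moving} points $\bar k_n'\bar v$. Off the kernel locus (non-zero $V_1$-component of $k'v$) this is fine because the convergence of $\bar a_n$ is locally uniform there; but when $k'v$ has zero $V_1$-component, the enormous expansion of $a_n$ in the $V_1$-directions competes with the uncontrolled rate at which $k_n'v$ approaches $k'v$, and the block data no longer determine the limit. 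Concretely, in $d=2$ take $a_n=\mathrm{diag}(t_n,t_n^{-1})$ with $t_n\to\infty$, $k_n=\mathrm{id}$, $k_n'=$ rotation by $\theta_n\to 0$, and $v=e_2$: then $a_nk_n'e_2\approx(-t_n\theta_n,\,t_n^{-1})$, whose projective limit is $\overline{e_2}$ if $\theta_n=t_n^{-3}$, is $\overline{(-1,1)}$ if $\theta_n=t_n^{-2}$, and does not exist if $\theta_n$ alternates between these — even though all of your bookkeeping data ($k_n,k_n'$, the ratios) converge. So your chosen subsequence need not converge pointwise (this undermines the tameness claim: the bad locus is a projective subspace, uncountable for $d\geq 3$, so you cannot repair it by a pointwise diagonal argument), and your identification $p[\bP^{d-1}]=\bigcup_i\overline{k(V_i)}$ is also false, as the example with $\theta_n=t_n^{-2}$ shows ($\overline{(-1,1)}\neq \bar k\,\overline{V_2}=\overline{e_2}$).

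The paper's proof avoids exactly this trap by never separating off the rotations: it works with the linear maps themselves, choosing scalars $\lambda_n$ so that $\lambda_n g_n|_{W}\to h_W$ is a non-zero limit on a subspace $W$, which settles the limit simultaneously at \emph{all} lines outside $\overline{\ker h_W}$, and then iterates this renormalization on the nested kernels $W_0=\R^d\supset W_1=\ker h_{W_0}\supset W_2\supset\cdots$; since the flag has length at most $d$, only finitely many subsequence extractions are needed, giving genuine pointwise convergence on all of $\bP^{d-1}$ and exhibiting the image as the finite union $\overline{L_1}\cup\overline{L_2}\cup\cdots$ of the images $L_i=\mathrm{Im}(h_{W_i})$, with $\dim\ker h_{W_0}\geq 1$ when $\|g_n\|\to\infty$ forcing dimension $<d-1$. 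To salvage your KAK version you would have to perform an analogous second-stage analysis of the restrictions of $g_n$ (suitably renormalized) to the kernel subspaces, at which point it essentially becomes the paper's induction; the Cartan decomposition by itself does not suffice.
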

\begin{proof}
Let $g_n$ be a matrix sequence in $\GL_d(\R)$. Given any subspace $W \subset \R^d$, by
passing to a subsequence and by choosing appropriate scalars $\lambda_n$, we can assume that
$\lambda_n g_n|_{W} \to h_W$, a non-zero linear map from $W$ into $\R^d$. For $v \not\in \ker(h_W)$ we have $\overline{\lambda_ng_n(v)} \to \overline{h_W(v)}$.

Now proceed by induction. Define $W_0 =\R^d$ and using the above procedure obtain a linear map $h_{W_0}$ with $W_1 := \ker(h_{W_0})$ and $L_1 := \Im(h_{W_0})$, such that after replacing $\{g_n\}$ by a subsequence  $\lim_{n \rightarrow \infty} \overline{g}_n \overline{x} \rightarrow \overline{h_{W_0} x}$, for every $x \in W_0 \setminus W_1$. Now repeat this procedure with $W_1$ to obtain, after passing to a further subsequence, that $\lim_{n \rightarrow \infty} \overline{g}_n \overline{x} \rightarrow \overline{h_{W_1} x}, \ \forall x \in W_1 \setminus W_2$, for some map $h_{W_1}: W_1 \rightarrow L_2$, with $\ker(h_{W_1}) := W_2$. Proceeding by induction, until reaching a trivial kernel, we finally obtain a subsequence such that $\overline{g_{n_i} }\to p \in E(\mathbb{P}^{d-1}, G)$.
This shows that $(\mathbb{P}^{d-1}, G)$ is tame.

Note that for $W_0 =\R^d$ the assumptions $g_n \in \SL_d(\R)$ 
and $\|g_n\| \to \infty$, imply that $\dim \ker h_W \ge 1$. 
Thus, in this case, the image of $p$ is a proper quasi-sub-variety.
\end{proof}
\begin{remark} \label{rem:dim_furst}
As remarked at the end of the proof, if $\norm{g_n} \rightarrow \infty$ then $p(\bP^{d-1} \setminus \overline{W}_1) \subset \overline{L}_1$ with $\dim(\overline{W}_1) + \dim(\overline{L}_1) = d-2$. Where $\dim$ denotes the projective dimension of the space. It is quite possible though to have $p(\bP^{d-1} \setminus \overline{W}) \subset \overline{L}$ with $\dim(\overline{L}) + \dim(\overline{W})$ much smaller. For example, in the second step of the proof, it might very well happen that $L_2 = h_{W_1}(W_1) < L_1$, in which case already $p(\bP^{d-1} \setminus \overline{W}_2) \subset \overline{L}_1$. This will be important in the proof below. There, we allow, slightly more generally, for $W$ to be a finite union of linear subspaces. 
\end{remark}

\begin{theorem}\label{lattice-proximal} 
Let $\Gamma$ be a lattice in $\SL_d(\R)$. Then, every uniformly rigid $\Gamma$-flow $(X,\Gamma)$ is almost reflecting.
\end{theorem}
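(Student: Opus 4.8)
The plan is to reduce Theorem \ref{lattice-proximal} to \thref{thm:RSP} by exhibiting, for a given uniformly rigid flow $(X,\Gamma)$ and the boundary action $(Z,\Gamma)$ of the lattice, an RSP sequence. The natural candidate for $Z$ is a Grassmannian $\Gr(l,\R^d)$ (for instance $\bP^{d-1}$ when it works), on which $\Gamma$ acts minimally and, by Furstenberg's Lemma \ref{Flem} together with strong proximality of the $G$-action on flag varieties, strongly proximally; one must however be careful that the stabilizers are topologically free, which is why one may need to choose $l$ (or a suitable partial flag variety) so that no nontrivial $\gamma\in\Gamma$ fixes a whole Grassmannian. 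First I would fix a rigid sequence $1\ne\gamma_n\to\Id$ in $\Homeo(X)$; since $\Gamma$ is a lattice in $\SL_d(\R)$ it is not precompact in $G$, so $\norm{\gamma_n}\to\infty$ after passing to a subsequence, and hence by Lemma \ref{Flem} (and Remark \ref{rem:dim_furst}) $\overline{\gamma_{n_i}}\to p$ in the enveloping semigroup $E(\bP^{d-1},G)$ with $p(\bP^{d-1}\setminus \overline{W})\subset \overline{L}$ for a proper quasi-sub-variety, i.e. $\dim\overline{W}+\dim\overline{L}\le d-2$.

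The heart of the argument is that a single limit $p$ is not yet strongly proximal (its image $\overline{L}$ may be positive-dimensional), so one has to compose several conjugates of the rigid sequence to collapse everything to a point, exactly as in the convergence-group proof of \thref{con-RSP} but now iterated $\dim\overline{L}$-many times instead of once. Concretely, I would argue by induction on the dimension of the image: given $p$ with image inside a quasi-sub-variety $\overline{L}$ of dimension $k\ge 1$, use minimality of $\Gamma\act Z$ to pick $g\in\Gamma$ so that $g^{-1}(\overline{W})$ (the ``bad set'' of $p$) is disjoint from the relevant subvarieties, and consider the conjugated-product sequence $s_i=\gamma_{n_i}\,g\,\gamma_{n_i}\,g^{-1}$; this is still rigid on $X$ (a product of conjugates of rigid sequences of homeomorphisms), and on $Z$ its enveloping-semigroup limit has image contained in $p(g\cdot p(\,\cdot\,))$, which by Remark \ref{rem:dim_furst} lands in a quasi-sub-variety of strictly smaller dimension, because applying $g$ then $p$ again to the $k$-dimensional $\overline{L}$ drops the dimension. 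After finitely many such steps the image is a single point $\overline{v}=a$, giving $(s_i)_*\mu\xrightarrow{w^*}\delta_a$ for all $\mu\in\Prob(Z)$ — i.e. strong proximality of this sequence — while rigidity on $X$ is preserved throughout. That produces the desired RSP sequence, and \thref{thm:RSP} then yields that $(X,\Gamma)$ is almost reflecting.

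Two points need care. First, tameness: since $(\bP^{d-1},G)$ is tame by Lemma \ref{Flem}, so is $(Z,\Gamma)$ for a Grassmannian $Z$ (Grassmannians embed $G$-equivariantly into products of projective spaces, or one runs the same Furstenberg induction on exterior powers), hence Lemma \ref{lem-SP} lets us take the strongly proximal witness to be a \emph{sequence}, which is what an RSP sequence requires; this is essential because the argument manufactures a sequence, not a net. Second, topological freeness of $(Z,\Gamma)$: a nontrivial element of $\Gamma$ acting trivially on $\Gr(l,\R^d)$ for all $l$ would be $\pm\Id$, so one works modulo the (finite, central) kernel, or simply chooses $Z$ to be a flag variety on which $\Gamma$ acts with the trivial pointwise-stabilizer subgroup being trivial; the hypothesis allows passing to $\Gamma/(\Gamma\cap\{\pm\Id\})$ without affecting the crossed-product statement since $C^*_r$ and the conditional expectation are insensitive to a central $\Z/2$ in the appropriate sense — alternatively one notes $\bP^{d-1}$ itself is faithful for $\PSL_d(\R)$ and handles the $\SL$ vs.\ $\PSL$ discrepancy directly.

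The main obstacle I expect is precisely the dimension-reduction bookkeeping: ensuring that at each stage the chosen $g\in\Gamma$ really does push the previous ``bad locus'' off the subvariety that governs the next limit, so that the dimension of the image strictly decreases and the induction terminates at a point rather than stabilizing at a positive-dimensional quasi-sub-variety. This is a genuinely geometric point about the $\Gamma$-action on subvarieties of the Grassmannian (one needs enough subvarieties in ``general position'' along a single orbit), and it is where the minimality and the non-elementary/irreducibility of the lattice action really enter; the rest — rigidity of products and conjugates on $X$, the enveloping-semigroup manipulation, and the appeal to \thref{thm:RSP} — is routine once this is in place.
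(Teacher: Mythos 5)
Your overall frame (reduce to \thref{thm:RSP} via Furstenberg's Lemma \ref{Flem} and products of conjugates of the rigid sequence) matches the paper, but the engine of your argument --- the induction that strictly decreases $\dim \overline{L}$ at each conjugation step --- has a genuine gap, and it is not how the paper proceeds. With your own choice of $g$ (made precisely so that the ``bad locus'' of $p$ is pushed off the relevant subvarieties and the composite limit is controlled), the set $g\overline{L}$ is disjoint from the singular/kernel locus of $p$, so $p$ restricted to $g\overline{L}$ acts as an injective projective-linear map; hence $p(g\overline{L})$ is again an $l$-dimensional projective subspace, and the limit of $s_i=\gamma_{n_i}g\gamma_{n_i}g^{-1}$ has image of exactly the same dimension $l$ --- it does not drop. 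The dimension of the image can only drop if $g\overline{L}$ meets the kernel of $p$, but then the exceptional locus of the composite grows, and point masses (which \thref{def:rsp} requires you to handle, since $(s_i)_*\nu\to\delta_{z_0}$ must hold for \emph{every} $\nu\in\Prob(Z)$) supported on that locus need not converge to $\delta_{z_0}$. Remark \ref{rem:dim_furst} does not say that composing with $p$ drops dimension; it only records that the image of a limit may be smaller than the generic bound. So as stated your induction need not terminate, and this is exactly the ``bookkeeping'' you flagged as the main obstacle.

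The paper resolves this differently, in two moves. First, it never tries to shrink $\overline{L}$: it fixes $l=\dim\overline{L}$ minimal over all rigid sequences and then, among those, $c=\dim\overline{W}$ minimal, and shows $c=-1$ (i.e.\ $W=\emptyset$) by contradiction: Borel density supplies $s\in\Gamma$ in an explicit nonempty Zariski-open set (conditions $\overline{L}\cap s^{\pm1}\overline{W}=\emptyset$ and $p(\overline{w})\not\subset s\overline{W}$), and the sequence $s\gamma_n s^{-1}\gamma_n$ (still rigid on $X$) has the same image dimension $l$ but a strictly smaller bad set, contradicting minimality of $c$. Second --- and this is the step your proposal is missing --- once $p(\bP^{d-1})\subset\overline{L}$ with no exceptional set, one does not need $l=0$ at all: one replaces $\bP^{d-1}$ by the Grassmannian $Z=\Gr(l,\R^d)$, where the whole subspace $L$ is a single point; since every point of $\bP^{d-1}$ is pushed into $\overline{L}$ by $\gamma_n$, every point of $Z$ converges to $L$, and dominated convergence gives $(\gamma_n)_*\mu\to\delta_L$ for all $\mu\in\Prob(Z)$. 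Thus the original rigid sequence is already an RSP sequence for $(X,\Gamma)$ and $(Z,\Gamma)$, and \thref{thm:RSP} applies. If you want to salvage your write-up, the fix is to adopt this lexicographic minimality argument to kill $W$ and then pass to the Grassmannian, rather than trying to collapse $\overline{L}$ to a point on $\bP^{d-1}$; your remarks on tameness, minimality and topological freeness of the flag-variety action remain pertinent in that version.
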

\begin{proof}
Consider the action of $\Gamma$ on $\bP^{d-1}(\bR)$ by projective linear transformations. We treat the situation where a sequence of elements $\{\gamma_n \ | \ n \in \N\}$ is a rigid sequence on $X$ and at the same time convergent in the Ellis group $\lim_{n \rightarrow \infty} \overline{\gamma}_{n} = p \in E(\bP^{d-1},\Gamma)$ on the projective space $\bP^{d-1}$ satisfying
\begin{equation} \label{eqn:best_dim}
p(\bP^{d-1} \setminus \overline{W}) \subset \overline{L}.
\end{equation}
Here $L < \R^n$ is a subspace, and $W \subset \R^n$ is a finite union of linear subspaces. Let us further assume that  the dimensions of these projective spaces: 
\begin{eqnarray*}
c & = & \dim(\overline{W}) := \max \{\dim(\overline{w}) \ | \ w \subset W {\text{ is a linear subspace}}\} \\
l & = & \dim(\overline{L})
\end{eqnarray*}
are chosen to be the minimal possible. Namely, $l$ is chosen to be minimal, ranging over {\bf{all}} possible rigid sequences $\gamma_n \in \Gamma$ as above, and then $c$ is chosen to be the minimal possible among all sequences that realize this value of $l$. 

Furstenberg's lemma gives such a sequence and an a priori upper bound on the dimensions. Indeed, apply this lemma to any rigid sequence $\{\gamma_n\} \subset \Gamma$. Since $\Gamma$ is discrete, and the sequence is by assumption not eventually constant, $c+l \le d-2$ by Remark \ref{rem:dim_furst}.

\begin{claim}
$c = -1$, that is $W = \emptyset$.
\end{claim}
\begin{proof}
Assume by way of contradiction that $c > -1$. Let us find some $s \in \Gamma$ such that 
\begin{enumerate}
\item \label{itm:i} $\overline{L} \cap  s \overline{W} = \emptyset$ and $\overline{L} \cap s^{-1} \overline{W} = \emptyset$
\item \label{itm:ii} $p(\overline{w}) \not \subset s \overline{W}$ for every one of the finite dimensional subspaces $w$ comprising $W$.
\end{enumerate}
Since $\dim(\overline{L}) + dim(\overline{W}) \le d-2$ the sets $\Omega^{1a} = \{s \ | \ \overline{L} \cap s\overline{W} = \emptyset\}$ and $\Omega^{1b} = \{s \ | \ \overline{L} \cap s^{-1}\overline{W} = \emptyset \}$ are both Zariski open and nonempty in $\SL_n(\C)$. Similarly, since $c < d-1$, for any $w \subset W$ as in (\ref{itm:ii}) the set $\Omega^2_{w} = \{s \in \SL_d(\C) \ | \ p(w) \not \subset s \overline{W} \}$ is also a nonempty Zariski open subset. Consequently, the finite intersection 
$$\Omega = \Omega^{1a} \cap \Omega^{1b}\cap \left(\bigcap_{w \subset W_k} \Omega^2_{w}\right),$$ 
nonempty, and Zariski open. By Borel's density theorem, $\Gamma$ is Zariski dense, so we can find some $s \in \Gamma \cap \Omega \ne \emptyset$, which we wanted.

Using the element $s$, define a new sequence $\gamma'_n = s \gamma_n s^{-1} \gamma_n$. Note that $s \gamma_n s^{-1}$ is rigid as a conjugate of a rigid sequence, and hence $\gamma'_n$ is rigid as a product of two rigid sequences of homeomorphisms of $X$. Since the system $(\bP^{d-1},\Gamma)$ is tame, we can assume, after replacing it with a subsequence, that it is also convergent in the Ellis group. Thus write $p' = \lim_{n \rightarrow \infty} \gamma'_n \in E(\bP^{d-1},\Gamma)$. 

Now, let us define 
$$W' = \left\{z \in W \ | \ p(z) \in sW \right\}, \qquad L' = s L.$$
Condition (\ref{itm:ii}) above guarantees that
$$c' = \max \left\{\dim(\overline{w}') \ | \ \overline{w}' {\text{ projective subspace }}, \overline{w}' \subset \overline{W}' \right\}  \lneqq c.$$

Finally we claim that $p'(\bP^{d-1} \setminus  \overline{W}') \subset \overline{L}'$. For this, let us fix any metric on $\bP^{d-1}$ compatible with the topology and denote by $(C)_{\epsilon}$ the $\epsilon$-neighbourhood of a set $C$ with respect to this metric. Now fix some $\overline{y} \in \bP^{d-1} \setminus \overline{W}'$ and let $\epsilon > 0$ be given. By continuity of $s$ and the compactness projective spaces, there is some $\epsilon_1 > 0$ such that 
\begin{equation} \label{step:3} 
s (\overline{L})_{\epsilon_1} \subset (\overline{L}')_{\epsilon}.
\end{equation} 
By the definition of $W'$, we 
know that $s^{-1} p(\overline{y}) \not \in \overline{W}$. Let $\epsilon_2$ be small enough so that the closed ball  $B = \overline{\left(s^{-1}p(\overline{y}) \right)_{\epsilon_2}}$ is still disjoint from $\overline{W}$. Thus by the construction of $\{\gamma_n\}$ we have $\lim_{n \rightarrow \infty} \gamma_n b \in \overline{L}, \ \forall b \in B$. By the compactness of $B$ there is some $N_1 \in \N$ such that 
\begin{equation} \label{step:2}
\gamma_n (B) \subset (\overline{L})_{\epsilon_1}, \qquad \forall n \ge N_1
\end{equation}
Finally since $\lim_{n \rightarrow \infty} \gamma_n \overline{y} = p(\overline{y})$ there is some $N_2 \in \N$ such that 
\begin{equation} \label{step:1}
s^{-1}\gamma_n \overline{y} \in B, \qquad \forall n \ge N_2
\end{equation}
Combining Equations (\ref{step:1}),(\ref{step:2}) and (\ref{step:3}) we conclude that for any $n \ge \max\{N_1,N_2\}$,
$$\gamma'_n \overline{y} = s \gamma_n s^{-1} \gamma_n \overline{y} \stackrel{(\ref{step:1})}{\in} s \gamma_n (B) \stackrel{(\ref{step:2})}{\subset} s ((L)_{\epsilon_1}) \stackrel{(\ref{step:3})}{\subset} (L')_{\epsilon}$$
Since $\epsilon$ and $\overline{y}$ were arbitrary it follows that $
\lim_{n \rightarrow \infty} \gamma'_n \overline{y} \in L'$ for every $\overline{y} \in \bP^{d-1} \setminus \overline{W}'$.
Our new sequence $\gamma_n'$ satisfies $
p(\bP^{d-1} \setminus \overline{W}) \subset \overline{L}$, contrary to the assumed minimality of $\dim(\overline{W}))$ in Equation \ref{eqn:best_dim}. This finishes the proof of the claim
\end{proof}
Now let $Z = \Gr(l,\R^d)$ be the Grassmann variety of $l$-dimensional subspaces of $\R^d$. The $l$-dimensional projective subspace $L \in Z$ can be considered a point in this space and let $z \in Z$ be any other point. We now know that $p(\overline{y}) = \lim_{n\rightarrow \infty} \gamma^n \overline{y} \in L$ for every $\overline{y} \in \bP^{d-1}$. By the compactness of projective spaces, we deduce that for every $\epsilon > 0$, there exists an $N \in \N$ such that $\gamma^n(z) \in (L)_{\epsilon}, \ \forall n \ge N$. In other words, in the Grassmannian $\lim_{n \rightarrow \infty}z = L$. Now Lebesgue's dominated convergence theorem yields a $w^*$ convergence $\lim_{n \rightarrow \infty} (\gamma_n)_* \mu = \delta_{L}$ for any probability measure $\mu$ on $Z$. Thus, the sequence $\{\gamma_n\}$ is an RSP sequence for the pair of dynamical systems $(X,\Gamma)$ and $(Z,\Gamma)$, and we conclude by appealing to Theorem \ref{thm:RSP}. 
\end{proof}

\section{Minimal ambient inclusions}
\label{sec:minimalambient}

\begin{definition}
\thlabel{def:minimalambient}   
A proper inclusion $\mathcal{B}\subset\mathcal{A}$ of unital $C^*$-algebras is called {\em minimal ambient} if 
there is no intermediate  $C^*$-algebra $\mathcal{C}$ with 
$\mathcal{B}\subsetneq\mathcal{C}\subsetneq\mathcal{A}$. We define a similar notion, with the same name, for a proper inclusion of von Neumann algebras $\mathcal{N}\subset\mathcal{M}$.
\end{definition} 

\begin{definition}
\thlabel{def:prime}   
A dynamical system, either topological or measure-preserving, is {\em prime} if it admits no nontrivial factor.
\end{definition} 

The following proposition is then clear.

\begin{prop}\label{am}
If $(X,\Gamma)$ is a dynamical system that is both reflecting and prime, then the inclusion
$\C^*_r(\Gamma) \subset C(X) \rtimes_r \Gamma$ is minimal ambient.
An analogous statement holds for probability measure preserving actions and their corresponding von Neumann algebras.
\end{prop}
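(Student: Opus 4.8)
\textbf{Proof proposal for Proposition \ref{am}.}
The plan is to verify directly that under the stated hypotheses there can be no intermediate algebra strictly between $C^*_r(\Gamma)$ and $C(X) \rtimes_r \Gamma$. So suppose $\cA$ is a $C^*$-algebra with $C^*_r(\Gamma) \subseteq \cA \subseteq C(X) \rtimes_r \Gamma$; I must show $\cA$ equals one of the two endpoints. The key point is that $\cA$ is in particular an intermediate algebra of the form $C^*_r(\Gamma) \subseteq \cA \subseteq C(X) \rtimes_r \Gamma$, so the reflecting hypothesis applies: there is a dynamical factor $X \to Y$ with $\cA = C(Y) \rtimes_r \Gamma$. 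Now invoke primeness of $(X,\Gamma)$: the only factors of $X$ are $X$ itself and the trivial one-point system. In the first case $Y = X$ and $\cA = C(X)\rtimes_r\Gamma$; in the second case $C(Y) = \C$ and $\cA = C^*_r(\Gamma)$. Hence no strictly intermediate algebra exists, which is exactly the statement that $C^*_r(\Gamma) \subset C(X) \rtimes_r \Gamma$ is minimal ambient. (One should also note the inclusion is proper: since $(X,\Gamma)$ is prime it is in particular nontrivial, so $C(X) \neq \C$ and the inclusion $C^*_r(\Gamma) \subsetneq C(X)\rtimes_r\Gamma$ is strict.)

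The von Neumann version is entirely parallel. Given an intermediate von Neumann algebra $L(\Gamma) \subseteq \cN \subseteq L^\infty(X,\mu) \rtimes \Gamma$ for a prime reflecting p.m.p.\ system, the reflecting property produces a measurable factor $(X,\mu) \to (Z,\theta)$ with $\cN = L^\infty(Z,\theta)\rtimes\Gamma$, and primeness forces $(Z,\theta)$ to be either $(X,\mu)$ or the trivial system, giving $\cN = L^\infty(X,\mu)\rtimes\Gamma$ or $\cN = L(\Gamma)$ respectively. Again nontriviality of the prime system guarantees the inclusion is proper.

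There is essentially no obstacle here: the proposition is a formal consequence of unwinding the two definitions (\thref{def:minimalambient} and \thref{def:prime}) together with the definition of reflecting (Definition \ref{def:reflecting}). The only small thing worth spelling out is that a prime system is by convention nontrivial (otherwise the statement about a \emph{proper} inclusion would be vacuous or false), so that one does get a genuine proper inclusion; and that in the measurable setting one uses the bijection recorded in Subsection on measurable dynamical systems between intermediate $\Gamma$-von Neumann subalgebras and intermediate factors, which is what makes "reflecting" exactly say that every intermediate algebra comes from a factor.
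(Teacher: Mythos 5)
Your proof is correct and is exactly the argument the paper has in mind: the paper simply declares the proposition "clear," and your write-up is the straightforward unwinding of the definitions of reflecting and prime (every intermediate algebra is of the form $C(Y)\rtimes_r\Gamma$, and primeness forces $Y$ to be a point or $X$ itself), with the same reasoning in the von Neumann case. Nothing differs in substance from the paper's intended proof.
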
 
The following proposition is a rich source for prime, rigid actions. When the group $\Gamma$ is Gromov hyperbolic (e.g., a nonabelian free group), the combination with Theorem \ref{thm:main} yields examples of minimal ambient inclusions. 
\begin{theorem} \label{thm:minimal_ambient}
Let $G$ be a topological group acting transitively on a compact space $X = G/L$, where $L < G$
is a maximal closed subgroup. Let $\Gamma< G$ be a dense countable subgroup then:
\begin{enumerate}
\item
Both systems $(X, G)$  and $(X,\Gamma)$ are prime.
\item
Let $\mu$ be a fully supported probability measure on $X$, which is $G$-invariant. Then
both systems $(X,\mathcal{B}, \mu, G)$ and $(X,\mathcal{B}, \mu, \Gamma)$ are prime.
\end{enumerate}
Moreover if $\Gamma$ is Gromov hyperbolic, then both $C^*_r (\Gamma) \subset C(X) \rtimes_r \Gamma$ and $L(\Gamma) \subset L^{\infty}(X,\mu) \rtimes \Gamma$ are minimal ambient. 
\end{theorem}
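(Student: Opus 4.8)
\textbf{Proof proposal for Theorem \ref{thm:minimal_ambient}.}

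The plan is to prove the three parts in order, with the ``Moreover'' clause following formally from the earlier results. For part (1), the key observation is the standard dictionary between factors of a transitive system and subgroups containing the stabilizer. A factor $(X,G) \to (Y,G)$ of $X = G/L$ corresponds to a closed subgroup $L \le H \le G$ via $Y = G/H$; since $L$ is a \emph{maximal} closed subgroup, the only options are $H = L$ (so $Y = X$) or $H = G$ (so $Y$ is a point). Hence $(X,G)$ is prime. For $(X,\Gamma)$ with $\Gamma$ dense, I would argue that any $\Gamma$-factor $\pi : X \to Y$ is automatically a $G$-factor: the equivalence relation on $X$ identifying fibers of $\pi$ is $\Gamma$-invariant and closed, and since $\Gamma$ is dense in $G$ and the $G$-action on $X$ is continuous, this relation is $G$-invariant as well; thus $\pi$ is a $G$-factor and part (1) for $G$ applies. (Care is needed here about what ``factor'' means — a continuous $\Gamma$-equivariant surjection onto a compact $\Gamma$-space — but density of $\Gamma$ in $\Homeo(X)$'s closure containing the $G$-image makes the invariance upgrade routine.)

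For part (2), I would use the bijection recalled in Subsection 2.2 between intermediate $\Gamma$-von Neumann subalgebras $L^\infty(Y,\eta) \subset \mathcal N \subset L^\infty(X,\mu)$ and measurable intermediate factors $(X,\mu) \to (Z,\theta) \to (Y,\eta)$; primeness of $(X,\mathcal B,\mu,\Gamma)$ is then the statement that $L^\infty(X,\mu)$ has no nontrivial $\Gamma$-invariant von Neumann subalgebra. Such a subalgebra is of the form $L^\infty(X,\cB_0,\mu)$ for a $\Gamma$-invariant sub-$\sigma$-algebra $\cB_0$. Again one upgrades $\Gamma$-invariance to $G$-invariance using density of $\Gamma$ in $G$ together with continuity of the $G$-action on $(X,\mu)$ (the $G$-action on $L^\infty(X,\mu)$ is continuous for the relevant topologies, as $\mu$ is $G$-invariant and fully supported), reducing to the $G$-case. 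For $G$ acting transitively on $X = G/L$ with a fully supported invariant measure, a $G$-invariant sub-$\sigma$-algebra corresponds to a $G$-factor of the measure space, which — because $\mu$ is fully supported and $X = G/L$ is homogeneous — is itself of the form $G/H$ for a closed $L \le H \le G$; maximality of $L$ forces triviality. This gives part (2).

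For the ``Moreover'' clause: assume $\Gamma$ is Gromov hyperbolic. Then $\Gamma$ is a non-elementary convergence group (acting on its Gromov boundary) with (AP), and in particular it is $C^*$-simple and i.c.c. — so we may assume it has no finite normal subgroups. Being Gromov hyperbolic and $\Gamma < G$ dense in a group acting transitively on the \emph{compact} space $X$, the action $(X,\Gamma)$ is equicontinuous (being by isometries, or at least uniformly equicontinuous since $G$ maps into $\Homeo(X)$ with the topology of uniform convergence and $X$ is compact) — hence uniformly rigid by the remark following Definition of uniform rigidity, and $(X,\mathcal B,\mu,\Gamma)$ is rigid in the measurable sense by the Lebesgue dominated convergence argument noted in Section \ref{sec:conv}. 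Therefore Theorem \ref{thm:main} (equivalently \thref{con-RSP}) applies: $(X,\Gamma)$ is reflecting (almost reflecting $+$ (AP)) and $(X,\mathcal B,\mu,\Gamma)$ is reflecting. Combining reflecting $+$ prime with Proposition \ref{am} yields that $C^*_r(\Gamma) \subset C(X)\rtimes_r \Gamma$ and $L(\Gamma) \subset L^\infty(X,\mu)\rtimes\Gamma$ are both minimal ambient.

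\textbf{Main obstacle.} The routine-looking but genuinely delicate step is the upgrade from $\Gamma$-invariance to $G$-invariance of a factor (or invariant sub-$\sigma$-algebra), in both the topological and measurable settings: one must know that the map $g \mapsto (g\text{-action on } X)$ lands in a group where the closure of $\Gamma$'s image acts, and that ``closed $\Gamma$-invariant'' structures (equivalence relations, sub-$\sigma$-algebras) are automatically closed under this closure. In the topological case this is clean; in the measurable case it requires that the $G$-representation on $L^2(X,\mu)$ (or the action on the measure algebra) be continuous, which is where full support and $G$-invariance of $\mu$ are used. I would isolate this as a short lemma before the main argument. A secondary point to get right is the precise meaning of ``maximal closed subgroup'' versus maximality among all subgroups, and ensuring the factor–subgroup dictionary is applied with the correct (closed) subgroups throughout.
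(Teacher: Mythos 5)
Your overall route is the paper's route: prove primeness of the $G$-systems from maximality of $L$, upgrade any $\Gamma$-factor to a $G$-factor using density of $\Gamma$ (via the closed $\Gamma$-invariant equivalence relation in the topological case, and via continuity of the $G$-action on the measure algebra in the measurable case), and then get the ``Moreover'' clause by combining rigidity of $(X,\Gamma)$ with Theorem \ref{thm:main} and Proposition \ref{am}. The parts of your argument devoted to these steps are fine (indeed, for the measurable $G$-primeness you supply more detail, via a point-realization argument, than the paper does).

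There is, however, one genuine flaw: your only justification for uniform rigidity of $(X,\Gamma)$ is the claim that the action is equicontinuous, ``being by isometries, or at least uniformly equicontinuous since $G$ maps into $\Homeo(X)$.'' This is false in general: continuity of the homomorphism $G \to \Homeo(X)$ does not make the image an equicontinuous family, and the paper's own Example \ref{ex-nuclear} -- a dense free subgroup of $\PSL(2,k)$ acting on $\bP^1 = G/P$ with $P$ a maximal parabolic -- is strongly proximal, hence as far from equicontinuous as possible. What is actually needed is only uniform rigidity, and the correct (and short) argument is the one in the paper: since $\Gamma$ is countable and dense in $G$ (and $G$ is non-discrete), there is a sequence of nontrivial elements $\gamma_n \to e$ in $G$; the topology $\Gamma$ inherits from $\Homeo(X)$, respectively from $\Aut(X,\cB,\mu)$, is coarser than (coincides with) the one inherited from $G$, so $\gamma_n \to \Id_X$ uniformly, giving a rigid sequence in both the topological and the measurable sense. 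With that repair your proof goes through. A minor further caution: ``Gromov hyperbolic'' by itself does not yield a non-elementary convergence group without finite normal subgroups (nor $C^*$-simplicity or i.c.c.), so your parenthetical ``we may assume it has no finite normal subgroups'' is not automatic; one must rule out elementary $\Gamma$ and finite normal subgroups (the paper glosses over this too, but your phrasing presents it as a consequence rather than an assumption).
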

\begin{proof}
(1) The transitive system $(X,G)$ is prime because $L$ is a maximal closed subgroup.
If $\pi : (X, \Gamma) \to (Y, \Gamma)$ is a factor map, then it is determined by the $\Gamma$-invariant,
closed equivalence relation $R =\{(x,x'): \pi(x) = \pi(x')\}$. However, because $\Gamma$ is dense in $G$ 
and $R$ is a closed set; it follows that
$R$ is also $G$-invariant, so that it defines also a $G$-factor map.
As $(X, G)$ is prime, this implies that either $R = X \times X$ or $R = \Delta_X$.

(2) Suppose that $\pi : (X,\mathcal{B},\mu, \Gamma)
\to (Y, \mathcal{D}, \nu, \Gamma)$ is a factor map. This means that 
$\mathcal{D}$ is a $\Gamma$-invariant sub-$\sigma$-algebra
of $\mathcal{B}$.
If $\Gamma \ni \gamma_i \to g \in G$, then by definition of the weak topology on $\gamma_i A$ converges to the set
$gA$ in the measure algebra $(\mathcal{B},\mu)$. 
This shows that the $\sigma$-algebra $\mathcal{D}$
is also $G$-invariant and, as the system 
$(X,\mathcal{B},\mu, G)$ is prime, so that
$\mathcal{D}$ is either all of $\mathcal{B}$, or just the trivial
$\sigma$-algebra $\{X, \emptyset\}$.
Thus, the $\Gamma$ system
$(X,\mathcal{B},\mu, \Gamma)$ is also prime.

The topology induced on $\Gamma$ as a subgroup of either $\Homeo(X)$ or $\Aut(X,\cB,\mu)$ coincides with the topology inherited from $G$. Thus,
any sequence of non-identity elements in $\Gamma$ converging in $G$ to the 
identity element is rigid. Thus the flow $(X,\Gamma)$
is uniformly rigid, measure rigid respectively, hence reflecting by Theorem \ref{thm:main} in both cases. 
Now Proposition \ref{am} implies a minimal ambient inclusion. 
\end{proof}
\begin{remark}
In the above theorem, it is enough to assume that $\Gamma$ is a convergence group with property (AP).
\end{remark}

In \cite[Main Theorem]{suzuki2017minimal}, Suzuki presented the first example of a 
minimal ambient inclusion
$C_r^*(\Gamma)\subset C(X)\rtimes_r\Gamma$ such that $C_r^*(\Gamma)$ is non-nuclear, while $C(X)\rtimes_r\Gamma$ is nuclear. Using Theorem \ref{thm:minimal_ambient} enables us to construct additional such examples:
\begin{example}\label{ex-nuclear}
Let $k$ be a local field and $\Gamma < G = \PSL(2,k)$ be a countable, dense, free subgroup. Let $(\bP^1,\Gamma)$ be the standard action on the projective plane $\bP^1=\bP^{1}(k^2)$. Then $C_r^*(\Gamma)\subset C(\bP^1)\rtimes_r\Gamma$ is a minimal ambient inclusion with $C(\bP^1)\rtimes_r\Gamma$ nuclear while $C_r^*(\Gamma)$ is non-nuclear. 
\end{example}
\begin{proof}
Recall that $\bP^1 = G/P$ with $P < G$ is the parabolic group of upper triangular matrices. The minimal ambiance of the inclusion follows directly from Theorem \ref{thm:minimal_ambient}. It is well known that this action is topologically amenable; hence $C(X)\rtimes_r\Gamma$ is nuclear by ~\cite[Theorem~4.3.4]{BO:book}. Finally since $\Gamma$ is a $C^*$-simple group, $C_r^*(\Gamma)$ is non-nuclear.
\end{proof}

\begin{example} \label{ex-lat}
Let $G = \SL_d(\R)$, $\Sigma < G$
a maximal lattice and  $\Gamma < G$ a dense free subgroup. Then the inclusion 
 $L(\Gamma)\subset L^{\infty}(G/\Sigma,\mu) \rtimes \Gamma$ 
 is minimal ambient, where $\mu$ is the $G$ invariant measure on $X = G/\Sigma$.
 \end{example}

\begin{proof}
As a maximal lattice, $\Sigma < G$ is a maximal closed subgroup. Indeed, let $\Sigma \lneqq L < G$ be any closed supergroup. Since $\Sigma$ was chosen as a maximal lattice, $L$ cannot be discrete. Nevertheless, $L$ is normalized by the lattice, which is Zariski dense by Borel's density theorem. So $L \lhd G$ and as $G$ is simple, this means that $L=G$. Now the result follows directly from Theorem \ref{thm:minimal_ambient} 
 \end{proof}

 Similarly, we get the topological examples when $d =2$.

\begin{example} \label{ex-lat-top}
Let $G = \SL_2(\R)$, $\Sigma < G$
a maximal cocompact lattice and $\Gamma < G$ a dense free subgroup. Then the inclusion 
 $C^*_r(\Gamma)\subset C(X) \rtimes_r \Gamma$ 
 is minimal ambient, where $X = G/\Sigma$.
 \end{example}

\begin{remark}
In the setting of Theorem \ref{thm:minimal_ambient} and in Examples \ref{ex-nuclear} \ref{ex-lat} and \ref{ex-lat-top}, one needs a dense convergence group with property (AP) to apply the theorem. The existence of dense free subgroups is well-known for many topological groups. See, for example, \cite{Epstein:free_subgroups}, \cite{GN:ubiquity}, \cite{BG:dense_free} and the references therein. To get a richer source of examples of dynamical systems, it is desirable to 
find more general types of dense convergence groups. 

Interestingly, in many cases, the existence of a dense free group already guarantees the existence of dense surface groups and even a dense copy of every limit group in the sense of Sela \cite{Sela:Diophantine_geom_over_free_groups}. It is conjectured in \cite{BG:limit} (see also \cite{BGSS:surface}) that every locally compact group containing a dense copy of $F_2$ contains a dense copy of every limit group. This 
is proved in the same paper whenever $G$ is an algebraic group over a local field of characteristic zero. Thus, one can replace the dense free group in Example \ref{ex-nuclear} with any other limit group. 
\end{remark}

\bibliographystyle{amsalpha}
\bibliography{rigidactions}

\end{document}